\date{\today}
\newtheorem{theorem}{Theorem}[section]
\newtheorem{lemma}[theorem]{Lemma}
\newtheorem{proposition}[theorem]{Proposition}
\newtheorem{definition}[theorem]{Definition}
\newtheorem{corollary}[theorem]{Corollary}
\newtheorem{remark}[theorem]{Remark}
\newcommand{\ot}{\otimes}
\newcommand{\co}{\circ}
\begin{document}

\begin{center}

{\huge{\bf A characterization of weak Hopf (co)quasigroups}}

\end{center}

\ \\
\begin{center}
{\bf J.N. Alonso \'Alvarez$^{1}$, J.M. Fern\'andez Vilaboa$^{2}$, R.
Gonz\'{a}lez Rodr\'{\i}guez$^{3}$}
\end{center}

\ \\
\hspace{-0,5cm}$^{1}$ Departamento de Matem\'{a}ticas, Universidad
de Vigo, Campus Universitario Lagoas-Marcosende, E-36280 Vigo, Spain
(e-mail: jnalonso@ uvigo.es)
\ \\
\hspace{-0,5cm}$^{2}$ Departamento de \'Alxebra, Universidad de
Santiago de Compostela.  E-15771 Santiago de Compostela, Spain
(e-mail: josemanuel.fernandez@usc.es)
\ \\
\hspace{-0,5cm}$^{3}$ Departamento de Matem\'{a}tica Aplicada II,
Universidad de Vigo, Campus Universitario Lagoas-Marcosende, E-36310
Vigo, Spain (e-mail: rgon@dma.uvigo.es)
\ \\

{\bf Abstract} In this paper we show that weak Hopf (co)quasigroups can be characterized by a Galois-type condition. Taking into account that this notion generalizes the ones of Hopf (co)quasigroup and weak Hopf algebra, we obtain as a consequence the first fundamental theorem for Hopf (co)quasigroups and a characterization of weak Hopf algebras in terms of bijectivity of a Galois-type morphism (also called fusion morphism).

\vspace{0.5cm}

{\bf Keywords.} Hopf algebra, weak Hopf algebra, Hopf (co)quasigroup, weak Hopf
(co)quasigroup, Galois extension.

{\bf MSC 2010:} 18D10, 16T05, 17A30, 20N05.

\section{introduction}

The notion of Hopf algebra and its generalizations appeared  as useful tools 
in relation with many branch of mathematics such that algebraic geometry, number theory, 
Lie theory, Galois theory, quantum group theory and so on. A common principle to obtain generalizations of the original notion of Hopf algebra is to weak some of  axioms of its definition.  For example, if one does not force the coalgebra structure to respect the unit of the algebra structure, one is lead to weak Hopf algebras. In a different way, the weakening of the associativity leads to Hopf quasigroups and quasi-Hopf  algebras. 

Weak Hopf algebras (or quantum groupoids in the
terminology of Nikshych and Vainerman \cite{NV}) were introduced
by B\"{o}hm, Nill and Szlach\'anyi \cite{bohm} as a new
generalization of Hopf algebras and groupoid algebras. A weak Hopf
algebra $H$ in a braided monoidal category \cite{IND} is an object that has
both, monoid and comonoid structure, with some relations between
them. The main difference with other Hopf algebraic constructions is that weak
Hopf algebras are coassociative but the coproduct is not required
to preserve the unit, equivalently, the counit is
not a monoid morphism.
 Some motivations to study weak Hopf
algebras come from the following facts: firstly, as group algebras
and their duals are the natural examples of Hopf algebras,
groupoid algebras and their duals provide examples of weak Hopf
algebras and, secondly, these algebraic structures have a
remarkable connection with the theory of algebra extensions,
important applications in the study of dynamical twists of Hopf
algebras and a deep link with quantum field theories and operator
algebras \cite{NV}.

On the other hand, Hopf (co)quasigroups were introduced in \cite{Majidesfera} in order to
understand the structure and relevant properties of the algebraic
$7$-sphere. They are a non-(co)associative generalizations of Hopf algebras.
 Like in the quasi-Hopf setting, Hopf quasigroups are not
associative but the lack of this property is compensated by some
axioms involving the antipode. The concept of Hopf quasigroup is a
particular instance of the notion of unital coassociative
$H$-bialgebra introduced in \cite{PI2}.

Recently \cite{AFG-Weak-quasi}, the authors have introduced a new generalization of Hopf algebras (called weak Hopf (co)quasigroups) which encompass weak Hopf algebras and Hopf (co)quasigroups. A family of non-trivial examples of weak Hopf quasigroups can be obtained by working with bigroupoids, i.e. bicategories where every $1$-cell is an equivalence and every $2$-cell is an isomorphism. Moreover, many properties of these algebraic structures remain valid under this unified approach (in particular, the Fundamental Theorem of Hopf Modules associated to a weak  Hopf quasigroup \cite{AFG-Weak-quasi}), and it is very natural to ask for other well-known properties related with Hopf algebras. In particular, Nakajima \cite{N} gave a characterization of ordinary Hopf algebras in terms of bijectivity of right or left Galois maps (also called fusion morphisms in \cite{Street}). This result was extended by Schauenburg \cite{S} to weak Hopf algebras, and by Brzezi\'nski \cite{Brz} to Hopf (co)quasigroups. The main purpose of this work is to give a similar characterization in the weak Hopf (co)quasigroup setting. More precisely, we state that a weak Hopf (co)quasigroup satisfies a right and left Galois-type condition, and these Galois morphisms must have almost right and left (co)linear inverses, and conversely. As a consequence we get the characterization of Hopf (co)quasigroups given by Brzezi\'nski \cite{Brz} (called the First Fundamental Theorem for Hopf (co)quasigroups), and the one obtained by Schauenburg \cite{S} for weak Hopf algebras.

\section{A characterization of weak Hopf quasigroups}

Throughout this paper $\mathcal C$ denotes a
strict  monoidal category with tensor product $\ot$
and unit object $K$. For each object $M$ in  $ \mathcal
C$, we denote the identity morphism by $id_{M}:M\rightarrow M$ and, for
simplicity of notation, given objects $M$, $N$, $P$ in $\mathcal
C$ and a morphism $f:M\rightarrow N$, we write $P\ot f$ for
$id_{P}\ot f$ and $f \ot P$ for $f\ot id_{P}$.

From now on we also assume that $\mathcal C$  admits equalizers and coequalizers. Then every idempotent morphism splits, i.e., for every morphism $\nabla_{Y}:Y\rightarrow Y$ such that $\nabla_{Y}=\nabla_{Y}\co\nabla_{Y}$, there exist an object $Z$ and morphisms $i_{Y}:Z\rightarrow Y$ and $p_{Y}:Y\rightarrow Z$ such that $\nabla_{Y}=i_{Y}\co p_{Y}$ and $p_{Y}\co i_{Y} =id_{Z}$. 

Also we assume that $\mathcal C$ is braided, that is:  for all $M$ and $N$ objects in $\mathcal C$,
there is a natural isomorphism $c_{M, N}:M\ot N\rightarrow N\ot M$,
called the braiding, satisfying the Hexagon Axiom (see \cite{JS}
for generalities). If the braiding satisfies $c_{N,M}\co
c_{M,N}=id_{M\ot N}$, the category $\mathcal C$ will be called
symmetric.

By a unital  magma in $\mathcal C$ we understand a triple $A=(A, \eta_{A}, \mu_{A})$ where $A$ is an object in $\mathcal C$ and $\eta_{A}:K\rightarrow A$ (unit), $\mu_{A}:A\ot A \rightarrow A$ (product) are morphisms in $\mathcal C$ such that $\mu_{A}\co (A\ot \eta_{A})=id_{A}=\mu_{A}\co (\eta_{A}\ot A)$. If $\mu_{A}$ is associative, that is, $\mu_{A}\co (A\ot \mu_{A})=\mu_{A}\co (\mu_{A}\ot A)$, the unital magma will be called a monoid in $\mathcal C$.   Given two unital magmas
(monoids) $A= (A, \eta_{A}, \mu_{A})$ and $B=(B, \eta_{B}, \mu_{B})$, $f:A\rightarrow B$ is a morphism of unital magmas (monoids)  if $\mu_{B}\co (f\ot f)=f\co \mu_{A}$ and $ f\co \eta_{A}= \eta_{B}$. 

By duality, a counital comagma in $\mathcal C$ is a triple ${D} = (D, \varepsilon_{D}, \delta_{D})$ where $D$ is an object in $\mathcal C$ and $\varepsilon_{D}: D\rightarrow K$ (counit), $\delta_{D}:D\rightarrow D\ot D$ (coproduct) are morphisms in $\mathcal C$ such that $(\varepsilon_{D}\ot D)\co \delta_{D}= id_{D}=(D\ot \varepsilon_{D})\co \delta_{D}$. If $\delta_{D}$ is coassociative, that is, $(\delta_{D}\ot D)\co \delta_{D}= (D\ot \delta_{D})\co \delta_{D}$, the counital comagma will be called a comonoid. If ${D} = (D, \varepsilon_{D}, \delta_{D})$ and ${ E} = (E, \varepsilon_{E}, \delta_{E})$ are counital comagmas
(comonoids), $f:D\rightarrow E$ is  a morphism of counital comagmas (comonoids) if $(f\ot f)\co \delta_{D} =\delta_{E}\co f$ and  $\varepsilon_{E}\co f =\varepsilon_D.$

If  $A$, $B$ are unital magmas (monoids) in $\mathcal C$, the object $A\ot B$ is a unital  magma (monoid) in $\mathcal C$ where $\eta_{A\ot B}=\eta_{A}\ot \eta_{B}$ and $\mu_{A\ot B}=(\mu_{A}\ot \mu_{B})\co (A\ot c_{B,A}\ot B).$  In a dual way, if $D$, $E$ are counital comagmas (comonoids) in $\mathcal C$, $D\ot E$ is a  counital comagma (comonoid) in $\mathcal C$ where $\varepsilon_{D\ot E}=\varepsilon_{D}\ot \varepsilon_{E}$ and $\delta_{D\ot
E}=(D\ot c_{D,E}\ot E)\co( \delta_{D}\ot \delta_{E}).$

Moreover, if $D$ is a comagma and $A$ a magma, given two morphisms $f,g:D\rightarrow A$ we will denote by $f\ast g$ its convolution product in $\mathcal C$, that is 
$f\ast g=\mu_{A}\co (f\ot g)\co \delta_{D}$.

Let $A$ be a monoid. The pair $(M, \phi_M)$ is a right $A$-module if $M$ is an object in $\mathcal C$ and $\phi_M:A\ot M\rightarrow M$ is a morphism in $\mathcal C$ such that $\phi_M\co (\eta_A\ot M)=id_M$ and $\phi_M\co (A\ot \phi_M)=\phi_M\co (\mu_A\ot M)$. Given two right $A$-modules $(M, \phi_M)$ and $(N, \phi_N$, a map $f:M\rightarrow N$ is a morphism of right $A$-modules if $\phi_N\co (A\ot f)=f\co \phi_M$. We shall denote by $\mathcal {C}_A$ the category of right $A$-modules. In an analogous way we can define the category of left $A$-modules and we denote it by $_A\mathcal {C}$.

 Let $D$ be a comonoid. The pair $(M, \rho_M)$ is a right $D$-comodule if $M$ is an object in $\mathcal C$ and $\rho_M:M\rightarrow M\ot D$ is a morphism in $\mathcal C$ satisfying that $(M\ot \varepsilon_D)\co \rho_M=id_M$ and $(\rho_M\ot D)\co \rho_M=(M\ot \delta_D)\co \rho_M$. Given two right $D$-comodules $(M, \rho_M)$ and $(N, \rho_N)$, a map $f:M\rightarrow N$ is a morphism of right $D$-comodules if $(f\ot D)\co \rho_M=\rho_N\co g$. We shall denote by $\mathcal {C}^D$ the category of right $D$-comodules. In an analogous way we can define the category of left $D$-comodules and we denote it by $^D\mathcal {C}$.

Now we recall the notion of weak Hopf quasigroup we introduced in \cite{AFG-Weak-quasi}.

\begin{definition}
\label{Weak-Hopf-quasigroup} {\rm  A weak Hopf quasigroup $H$   in ${\mathcal
C}$ is a unital magma $(H, \eta_H, \mu_H)$ and a comonoid $(H,\varepsilon_H, \delta_H)$ such that the following axioms hold:
\begin{itemize}
\item[(a1)] $\delta_{H}\co \mu_{H}=(\mu_{H}\ot \mu_{H})\co \delta_{H\ot H}.$
\item[(a2)] $\varepsilon_{H}\co \mu_{H}\co (\mu_{H}\ot
H)=\varepsilon_{H}\co \mu_{H}\co (H\ot \mu_{H})$
\item[ ]$= ((\varepsilon_{H}\co \mu_{H})\ot (\varepsilon_{H}\co
\mu_{H}))\co (H\ot \delta_{H}\ot H)$ 
\item[ ]$=((\varepsilon_{H}\co \mu_{H})\ot (\varepsilon_{H}\co
\mu_{H}))\co (H\ot (c_{H,H}^{-1}\co\delta_{H})\ot H).$
\item[(a3)]$(\delta_{H}\ot H)\co \delta_{H}\co \eta_{H}=(H\ot
\mu_{H}\ot H)\co ((\delta_{H}\co \eta_{H}) \ot (\delta_{H}\co
\eta_{H}))$  \item[ ]$=(H\ot (\mu_{H}\co c_{H,H}^{-1})\ot
H)\co ((\delta_{H}\co \eta_{H}) \ot (\delta_{H}\co \eta_{H})).$

\item[(a4)] There exists a morphism $\lambda_{H}:H\rightarrow H$ (called the antipode of $H$) such that, if we denote by $\Pi_{H}^{L}$ (target morphism) and by $\Pi_{H}^{R}$ (source morphism) the morphisms 
$$\Pi_{H}^{L}=((\varepsilon_{H}\co\mu_{H})\ot H)\co (H\ot c_{H,H})\co ((\delta_{H}\co \eta_{H})\ot H),$$
$$\Pi_{H}^{R}=(H\ot(\varepsilon_{H}\co \mu_{H}))\co (c_{H,H}\ot H)\co (H\ot (\delta_{H}\co \eta_{H})),$$
then:
\begin{itemize}
\item[(a4-1)] $\Pi_{H}^{L}=id_{H}\ast \lambda_{H}.$
\item[(a4-2)] $\Pi_{H}^{R}=\lambda_{H}\ast id_{H}.$
\item[(a4-3)] $\lambda_{H}\ast \Pi_{H}^{L}=\Pi_{H}^{R}\ast \lambda_{H}= \lambda_{H}.$
\item[(a4-4)] $\mu_H\circ (\lambda_H\ot \mu_H)\circ (\delta_H\ot H)=\mu_{H}\co (\Pi_{H}^{R}\ot H).$
\item[(a4-5)] $\mu_H\circ (H\ot \mu_H)\circ (H\ot \lambda_H\ot H)\circ (\delta_H\ot H)=\mu_{H}\co (\Pi_{H}^{L}\ot H).$
\item[(a4-6)] $\mu_H\circ(\mu_H\ot \lambda_H)\circ (H\ot \delta_H)=\mu_{H}\co (H\ot \Pi_{H}^{L}).$
\item[(a4-7)] $\mu_H\circ (\mu_H\ot H)\circ (H\ot \lambda_H\ot H)\circ (H\ot \delta_H)=\mu_{H}\co (H\ot \Pi_{H}^{R}).$
\end{itemize}

\end{itemize}

Note that, if in the previous definition the triple $(H, \eta_H, \mu_H)$ is a monoid, we obtain the notion of weak Hopf algebra in a braided category introduced in \cite{AFG1} (see also \cite{IND}). Under this assumption, if ${\mathcal C}$ is symmetric, we have the monoidal version of the original definition of weak Hopf algebra introduced by B\"{o}hm, Nill and Szlach\'anyi in \cite{bohm}. On the other hand, if  $\varepsilon_H$ and $\delta_H$ are  morphisms of unital magmas, (equivalently, $\eta_{H}$, $\mu_{H}$ are morphisms of counital comagmas), $\Pi_{H}^{L}=\Pi_{H}^{R}=\eta_{H}\ot \varepsilon_{H}$ and, as a consequence, we have  the notion of Hopf quasigroup defined  by Klim and Majid in \cite{Majidesfera} in the category of vector spaces over a field ${\Bbb F}$. (Note that in this case there is no difference between the definitions for the symmetric and the braided  settings).
}
\end{definition}

Now we recall some properties related with weak Hopf quasigroups we will need in what sequel. The proofs are identical to the ones given in \cite{AFG-Weak-quasi}, because condition (a4) of Definition \ref{Weak-Hopf-quasigroup} is unnecessary.

\begin{proposition}
\label{propertieswithouta4}
Let $H$ be a unital magma and comonoid such that conditions (a1), (a2) and (a3) of Definition \ref{Weak-Hopf-quasigroup} hold. Define $\overline{\Pi}_{H}^{L}$ and $\overline{\Pi}_{H}^{R}$ by 
$$\overline{\Pi}_{H}^{L}=(H\ot (\varepsilon_{H}\co \mu_{H}))\co ((\delta_{H}\co \eta_{H})\ot H)$$
and 
$$\overline{\Pi}_{H}^{R}=((\varepsilon_{H}\co \mu_{H})\ot H)\co (H\ot (\delta_{H}\co \eta_{H})).$$
Then the morphisms $\Pi_{H}^{L}$, $\Pi_{H}^{R}$, $\overline{\Pi}_{H}^{L}$ and 
$\overline{\Pi}_{H}^{R}$ are idempotent. Moreover, the following equalities
\begin{equation}
\label{unidadpi}
\Pi_{H}^{L}\co \eta_H=\Pi_{H}^{R}\co \eta_H=\overline{\Pi}_{H}^{L}\co \eta_H=\overline{\Pi}_{H}^{R}\co \eta_H=\eta_H,
\end{equation}
\begin{equation}
\label{counidadpi}
\varepsilon_H\co \Pi_{H}^{L}=\varepsilon_H\co \Pi_{H}^{R}=\varepsilon_H\co \overline{\Pi}_{H}^{L}=\varepsilon_H\co \overline{\Pi}_{H}^{R}=\varepsilon_H,
\end{equation}
\begin{equation}
\label{pi-l}
\Pi_{H}^{L}\ast id_{H}=id_{H}\ast \Pi_{H}^{R}=id_{H},
\end{equation}
\begin{equation}
\label{mu-pi-l}
\mu_{H}\co (H\ot \Pi_{H}^{L})=((\varepsilon_{H}\co
\mu_{H})\ot H)\co (H\ot c_{H,H})\co (\delta_{H}\ot H),
\end{equation}
\begin{equation}
\label{mu-pi-r}
\mu_{H}\co (\Pi_{H}^{R}\ot H)=(H\ot(\varepsilon_{H}\co \mu_{H}))\co (c_{H,H}\ot H)\co
(H\ot \delta_{H}),
\end{equation}
\begin{equation}
\label{mu-pi-l-var}
\mu_{H}\co (H\ot \overline{\Pi}_{H}^{L})=(H\ot (\varepsilon_{H}\co
\mu_{H}))\co (\delta_{H}\ot H),
\end{equation}
\begin{equation}
\label{mu-pi-r-var}
\mu_{H}\co (\overline{\Pi}_{H}^{R}\ot H)=((\varepsilon_{H}\co
\mu_{H})\ot H)\co (H\ot \delta_{H}), 
\end{equation}
\begin{equation}
\label{delta-pi-l}
 (H\ot \Pi_{H}^{L})\co \delta_{H}=(\mu_{H}\ot H)\co (H\ot c_{H,H})\co ((\delta_{H}\co \eta_{H})\ot H),
\end{equation}
\begin{equation}
\label{delta-pi-r}
(\Pi_{H}^{R}\ot H)\co \delta_{H}=(H\ot \mu_{H})\co (c_{H,H}\ot H)\co (H\ot (\delta_{H}\co \eta_{H})),
\end{equation}
\begin{equation}
\label{pi-l-mu-pi-l}
 \Pi_{H}^{L}\co \mu_{H}\co (H\ot \Pi_{H}^{L})=\Pi_{H}^{L}\co \mu_{H}=\Pi_{H}^{L}\co \mu_{H}\co (H\ot  \overline{\Pi}_{H}^{L}),
\end{equation}
\begin{equation}
\label{pi-delta-mu-pi-3}
(H\ot \Pi^{L}_{H})\circ \delta_{H}\circ\Pi^{L}_{H}=\delta_{H}\circ \Pi^{L}_{H}=(H\ot \overline{\Pi}^{R}_{H})\circ \delta_{H}\circ\Pi^{L}_{H},
\end{equation}
\begin{equation}
\label{pi-l-barra-delta}
 (\overline{\Pi}_{H}^{L}\ot H)\co \delta_{H}=(H\ot \mu_{H})\co ((\delta_{H}\co \eta_{H})\ot H),
\end{equation}
\begin{equation}
\label{pi-r-barra-delta}
 (H\ot \overline{\Pi}_{H}^{R})\co \delta_{H}=(\mu_{H}\ot H)\co (H\ot (\delta_{H}\co \eta_{H})),
\end{equation}
\begin{equation}
\label{pi-delta-mu-pi-4}
(\Pi^{R}_{H}\ot H)\circ \delta_{H}\circ \Pi^{R}_{H}=\delta_{H}\circ \Pi^{R}_{H}=(\overline{\Pi}^{L}_{H}\ot H)\co \delta_H\co \Pi^{R}_{H}, 
\end{equation}
\begin{equation}
\label{doblepiLmu}
\mu_H\co (\Pi^{L}_{H}\ot \Pi^{L}_{H})=\Pi^{L}_{H}\co \mu_H\co (\Pi^{L}_{H}\ot \Pi^{L}_{H}), 
\end{equation}
\begin{equation}
\label{doblepiRmu}
\mu_H\co (\Pi^{R}_{H}\ot \Pi^{R}_{H})=\Pi^{R}_{H}\co \mu_H\co (\Pi^{R}_{H}\ot \Pi^{R}_{H}), 
\end{equation}
\begin{equation}
\label{pi-composition-2}
\overline{\Pi}_{H}^{R}\co\Pi_{H}^{L}=\Pi_{H}^{L},\;\;\; \overline{\Pi}_{H}^{L}\co \Pi_{H}^{R}=\Pi_{H}^{R},\;\;\; 
\Pi_{H}^{L}\co \overline{\Pi}_{H}^{L}=\Pi_{H}^{L}\;\;\;,\overline{\Pi}_{H}^{R}\co\Pi_{H}^{L}=\Pi_{H}^{L},
\end{equation}
hold.

\end{proposition}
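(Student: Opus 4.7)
The plan is to verify each identity by direct diagrammatic computation using only axioms (a1)--(a3); the key observation motivating the statement is that none of the listed identities actually require an antipode, so the argument is a purely structural manipulation of the unital magma together with the comonoid structure. I would first isolate the ``raising'' formulas \eqref{mu-pi-l}--\eqref{mu-pi-r-var}: for instance, in \eqref{mu-pi-l}, expanding $\Pi^{L}_{H}$ inside $\mu_{H}\co (H\ot \Pi^{L}_{H})$ and inserting counits produces a morphism of the form $\varepsilon_{H}\co \mu_{H}\co (\mu_{H}\ot H)$ composed with a $\delta_{H}\co \eta_{H}$ factor, and the first equality in (a2) converts this into exactly the right-hand side of \eqref{mu-pi-l}. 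Dually, \eqref{delta-pi-l}--\eqref{pi-r-barra-delta} are obtained from (a3) by exchanging the roles of $\mu_{H}$ and $\delta_{H}$.

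With those eight formulas in hand, the rest is bookkeeping. The normalization identities \eqref{unidadpi} and \eqref{counidadpi} reduce to unit/counit calculations: e.g.\ plugging $\eta_{H}$ into $\Pi^{L}_{H}$ and using $\mu_{H}\co (\eta_{H}\ot \eta_{H})=\eta_{H}$ together with naturality of $c$ gives $\eta_{H}$. Idempotency of the four projections then follows by substituting \eqref{mu-pi-l} (resp.\ its variants) into $\Pi^{L}_{H}\co \Pi^{L}_{H}$ and collapsing via coassociativity and the counit axiom. The convolution identity \eqref{pi-l} follows by expanding $\Pi^{L}_{H}\ast id_{H}=\mu_{H}\co (\Pi^{L}_{H}\ot H)\co \delta_{H}$ via \eqref{delta-pi-l} applied to the first factor, then applying (a2) and counit. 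For the remaining entries \eqref{pi-l-mu-pi-l}--\eqref{pi-composition-2} I would chain the core formulas: for \eqref{doblepiLmu}, apply \eqref{mu-pi-l} to rewrite $\mu_{H}\co (\Pi^{L}_{H}\ot \Pi^{L}_{H})$ and then use idempotency; for \eqref{pi-composition-2}, combine \eqref{mu-pi-l-var} (or \eqref{delta-pi-l}) with the definitions of the barred projections; for \eqref{pi-delta-mu-pi-3} and \eqref{pi-delta-mu-pi-4}, apply \eqref{delta-pi-l}/\eqref{delta-pi-r} iteratively and reduce with coassociativity.

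The main obstacle throughout is not any single identity but the bookkeeping with the braiding: because (a2) and (a3) each come in two variants involving $c_{H,H}$ and $c_{H,H}^{-1}$, the left/right and barred/unbarred identities must each be matched to the correct variant, and naturality of $c$ must be invoked carefully when sliding the $\delta_{H}\co \eta_{H}$ or $\varepsilon_{H}\co \mu_{H}$ pieces past other tensor factors. Since the full derivation repeats verbatim the computations of \cite{AFG-Weak-quasi} and at no point invokes (a4), the proposition follows from that reference.
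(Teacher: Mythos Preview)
Your proposal is correct and matches the paper's own treatment: the paper gives no proof here, stating only that the proofs are identical to those in \cite{AFG-Weak-quasi} and that condition (a4) is unnecessary. Your outline simply fleshes out what those computations look like (via (a1)--(a3), naturality of $c$, and unit/counit axioms) before likewise deferring to \cite{AFG-Weak-quasi}, so the approaches coincide.
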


The following properties are also proved in \cite{AFG-Weak-quasi}, but we give a slightly different proof without using (a4) of Definition \ref{Weak-Hopf-quasigroup}.  
\begin{proposition}
\label{otherproperties}
Let $H$ be a unital magma and comonoid such that conditions (a1), (a2) and (a3) of Definition \ref{Weak-Hopf-quasigroup} hold. Then we have that
\begin{equation}
\label{PiLRconvolution}
 \Pi_{H}^{L}\ast \Pi_{H}^{L}=\Pi_{H}^{L},\;\;\;, \Pi_{H}^{R}\ast \Pi_{H}^{R}=\Pi_{H}^{R}
\end{equation}
\begin{equation}
\label{aux-1-monoid-hl}
\delta_{H}\co \mu_{H}\co (\Pi_{H}^{L}\ot H)=(\mu_{H}\ot H)\co (\Pi_{H}^{L}\ot \delta_{H}), 
\end{equation}
\begin{equation}
\label{aux-2-monoid-hl}
\delta_{H}\co \mu_{H}\co (H\ot \Pi_{H}^{L})=(\mu_{H}\ot H)\co (H\ot c_{H,H})\co  (\delta_{H}\ot \Pi_{H}^{L}). 
\end{equation}
\begin{equation}
\label{aux-1-monoid-hr}
\delta_{H}\co \mu_{H}\co (H\ot \Pi_{H}^{R})=(H\ot \mu_{H})\co (\delta_{H}\ot \Pi_{H}^{R}), 
\end{equation}
\begin{equation}
\label{aux-2-monoid-hr}
\delta_{H}\co \mu_{H}\co (\Pi_{H}^{R}\ot H)=(H\ot \mu_{H})\co (c_{H,H}\ot H)\co  (\Pi_{H}^{R}\ot \delta_{H}). 
\end{equation}
\begin{equation}
\label{monoid-hl-1}
\mu_{H}\co ((\mu_{H}\co (\Pi_{H}^{L}\ot H))\ot H)=\mu_{H}\co (\Pi_{H}^{L}\ot \mu_{H}), 
\end{equation}
\begin{equation}
\label{monoid-hl-2}
\mu_{H}\co (H\ot (\mu_{H}\co (\Pi_{H}^{L}\ot H)))=\mu_{H}\co ((\mu_{H}\co (H\ot \Pi_{H}^{L}))\ot H), 
\end{equation}
\begin{equation}
\label{monoid-hl-3}
\mu_{H}\co (H\ot (\mu_{H}\co (H\ot \Pi_{H}^{L})))=\mu_{H}\co (\mu_{H}\ot \Pi_{H}^{L}), 
\end{equation}
and similar equalities to (\ref{monoid-hl-1}), (\ref{monoid-hl-2}) and (\ref{monoid-hl-3}) with $\Pi_{H}^{R}$ instead of $\Pi_{H}^{L}$ also hold.

\end{proposition}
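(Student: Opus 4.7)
The strategy is to derive all seven families of identities directly from axioms (a1)--(a3) together with the properties already assembled in Proposition \ref{propertieswithouta4}, without ever invoking the antipode axiom (a4). In every case the proof is a diagrammatic computation in which genuine associativity is \emph{never} used; instead, each reordering of a $\mu_{H}$ composite is routed either through (a2) (which provides a scalar-valued associativity) or through one of the explicit $\Pi$-formulas (\ref{mu-pi-l})--(\ref{pi-r-barra-delta}).

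For the convolution idempotency (\ref{PiLRconvolution}), I would start from (\ref{pi-l}) in the form $\Pi_{H}^{L}\ast id_{H}=id_{H}$ and precompose with $\Pi_{H}^{L}$:
\[
\Pi_{H}^{L}=\Pi_{H}^{L}\co(\Pi_{H}^{L}\ast id_{H})=\Pi_{H}^{L}\co\mu_{H}\co(\Pi_{H}^{L}\ot H)\co\delta_{H}.
\]
Now apply (\ref{pi-l-mu-pi-l}) to insert a $\Pi_{H}^{L}$ on the second tensor factor, obtaining $\Pi_{H}^{L}\co\mu_{H}\co(\Pi_{H}^{L}\ot\Pi_{H}^{L})\co\delta_{H}$, and use (\ref{doblepiLmu}) to strip off the outer $\Pi_{H}^{L}$; what remains is precisely $\Pi_{H}^{L}\ast\Pi_{H}^{L}$. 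The identity $\Pi_{H}^{R}\ast\Pi_{H}^{R}=\Pi_{H}^{R}$ is symmetric: it starts from $id_{H}\ast\Pi_{H}^{R}=id_{H}$ and uses the right-handed analogue of (\ref{pi-l-mu-pi-l}) (proved by the same argument as (\ref{pi-l-mu-pi-l}) itself) together with (\ref{doblepiRmu}).

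For the coproduct--multiplication identities (\ref{aux-1-monoid-hl})--(\ref{aux-2-monoid-hr}), the common plan is to expand $\delta_{H}\co\mu_{H}$ via (a1) as $(\mu_{H}\ot\mu_{H})\co\delta_{H\ot H}$, then to rewrite $\delta_{H}\co\Pi_{H}^{L}$ (resp.\ $\delta_{H}\co\Pi_{H}^{R}$) by the appropriate one of (\ref{delta-pi-l}), (\ref{delta-pi-r}), (\ref{pi-l-barra-delta}), (\ref{pi-r-barra-delta}), each of which expresses $(H\ot\Pi)\co\delta_{H}$ or $(\Pi\ot H)\co\delta_{H}$ purely in terms of $\delta_{H}\co\eta_{H}$, $\mu_{H}$, and the braiding. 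After coassociativity and the naturality of $c_{H,H}$ have been used to rearrange the resulting diagram, both sides collapse to the same normal form. The four cases are entirely parallel, differing only in which of the four $\Pi$-formulas is substituted and on which tensor factor the projection sits.

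For the partial associativity identities (\ref{monoid-hl-1})--(\ref{monoid-hl-3}), the key idea is that although $\mu_{H}$ is not associative, the expressions $\mu_{H}\co(H\ot\Pi_{H}^{L})$ and $\mu_{H}\co(\Pi_{H}^{L}\ot H)$ can be rewritten via (\ref{mu-pi-l}) and the definition of $\Pi_{H}^{L}$ as composites whose outermost $\mu_{H}$'s meet only under a $\varepsilon_{H}\co\mu_{H}$ or through factors produced by $\delta_{H}\co\eta_{H}$; at that point (a2) and (a3) legitimately reshuffle the brackets. Applying this substitution on both sides of each equation, and invoking the already-proved (\ref{aux-1-monoid-hl})--(\ref{aux-2-monoid-hl}) where needed to move $\delta_{H}$ past a product, brings the two sides to the same canonical expression; the $\Pi_{H}^{R}$ analogues go through by the mirror argument. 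The principal obstacle throughout is exactly this bookkeeping: every manipulation must be checked to land in a situation where (a2), (a3), or one of the $\Pi$-formulas applies, since one cannot simply reassociate $\mu_{H}\co(\mu_{H}\ot H)$ into $\mu_{H}\co(H\ot\mu_{H})$.
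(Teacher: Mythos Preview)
Your strategy is sound, and for the identities (\ref{aux-1-monoid-hl})--(\ref{monoid-hl-3}) it coincides with what the paper does: expand $\delta_{H}\co\mu_{H}$ via (a1), substitute the explicit $\Pi$-formulas from Proposition \ref{propertieswithouta4}, and route every reassociation of $\mu_{H}$ through (a2) or through one of the already-proved compatibilities. The paper in fact spells out (\ref{monoid-hl-2}) in detail using precisely the ingredients you name---the counit trick $id_{H}=(\varepsilon_{H}\ot H)\co\delta_{H}$, then (a1), (\ref{aux-2-monoid-hl}), (a2), and (\ref{aux-1-monoid-hl}) in sequence---and declares the remaining cases analogous.

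The one place where your route differs is (\ref{PiLRconvolution}). The paper proves $\Pi_{H}^{L}\ast\Pi_{H}^{L}=\Pi_{H}^{L}$ by a direct three-line computation from the definition of $\Pi_{H}^{L}$: expanding both projections as $((\varepsilon_{H}\co\mu_{H})\ot H)\co(H\ot c_{H,H})\co((\delta_{H}\co\eta_{H})\ot H)$, using naturality of $c$ and (a2), and then collapsing the two copies of $\delta_{H}\co\eta_{H}$ via (a1) into a single one. Your argument instead bootstraps from three facts already stocked in Proposition \ref{propertieswithouta4}, namely (\ref{pi-l}), (\ref{pi-l-mu-pi-l}), and (\ref{doblepiLmu}). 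Both approaches are correct; the paper's is shorter and stays closer to the axioms, while yours illustrates nicely that (\ref{PiLRconvolution}) is a formal consequence of the idempotency and absorption properties of $\Pi_{H}^{L}$ already recorded. Note that your version for $\Pi_{H}^{R}$ requires the right-handed analogue of (\ref{pi-l-mu-pi-l}), which is indeed provable by the mirror argument but is not literally stated in Proposition \ref{propertieswithouta4}, so you would need to supply that line.
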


\begin{proof}
 We begin by showing the first equality of (\ref{PiLRconvolution}), the second one is similar. Using the definition of $\Pi_{H}^{L}$, the naturalness of $c$, and (a2) and (a1) of Definition \ref{Weak-Hopf-quasigroup}, 

\begin{itemize}
\item[ ]$\hspace{0.38cm} \Pi_{H}^{L}\ast \Pi_{H}^{L}$

\item[ ]$=(\varepsilon_{H}\ot H)\co \mu_{H\ot H}\co (H\ot H\ot ((\mu_H\ot H)\co (H\ot c_{H,H})))\co ((\delta_H\co \eta_H)\ot (\delta_H\co \eta_H)\ot H)$

\item[ ]$=((\varepsilon_{H}\co \mu_H)\ot H)\co (H\ot c_{H,H})\co ((\mu_{H\ot H}\co (\delta_H\ot \delta_H)\ot H)\co (\eta_H\ot \eta_H\ot H)$

\item[ ]$=\Pi_{H}^{L}.$
\end{itemize}

The proof of (\ref{aux-1-monoid-hl}) and (\ref{aux-2-monoid-hl}) is the same that the given in \cite{AFG-Cleftwhq}, and the equalities (\ref{aux-1-monoid-hr}) and (\ref{aux-2-monoid-hr}) follow a similar pattern. As far as the last equalities, the proof is somewhat different to the given in \cite{AFG-Cleftwhq} because in this case we can not use the antipode. We only show (\ref{monoid-hl-2}), the other being analogous. Using that $H$ is a comonoid, condition (a1) of Definition \ref{Weak-Hopf-quasigroup} (twice), (\ref{aux-2-monoid-hl}), condition (a2) of Definition \ref{Weak-Hopf-quasigroup} and (\ref{aux-1-monoid-hl}),

\begin{itemize}
\item[ ]$\hspace{0.38cm} \mu_{H}\co (H\ot (\mu_{H}\co (\Pi_{H}^{L}\ot H)))$

\item[ ]$=(\varepsilon_H\ot H)\co \delta_H\co \mu_{H}\co (H\ot (\mu_{H}\co (\Pi_{H}^{L}\ot H)))$

\item[ ]$=(\varepsilon_H\ot H)\co \mu_{H\ot H}\co ((\delta_H\co \mu_H\co (H\ot \Pi_{H}^{L}))\ot \delta_H)$

\item[ ]$=((\varepsilon_H\co \mu_{H}\co (\mu_H\ot H))\ot \mu_H)\co (H\ot H\ot c_{H,H}\ot H)\co (H\ot c_{H,H}\ot H\ot H)\co (\delta_H\ot \Pi_{H}^{L} \ot \delta_H)$

\item[ ]$=(\varepsilon_H\ot H)\co \mu_{H\ot H}\co (\delta_H\ot ((\mu_H\ot H)\co (\Pi_{H}^{L}\ot \delta_H)))$

\item[ ]$=(\varepsilon_H\ot H)\co \mu_{H\ot H}\co (\delta_H\ot \delta_H)\co (H\ot (\mu_H\co (\Pi_{H}^{L}\ot H)))$

\item[ ]$=\mu_{H}\co ((\mu_{H}\co (H\ot \Pi_{H}^{L}))\ot H).$
\end{itemize}

\end{proof}

\begin{remark}
\label{HsubLmonoide}
{\rm Let $H$ be a unital magma and comonoid such that conditions (a1), (a2) and (a3) of Definition \ref{Weak-Hopf-quasigroup} hold.
 Denote by $H_L=Im (\Pi_{H}^{L})$ and let $p_{L}:H\rightarrow H_L$, $i_{L}:H_L\rightarrow H$ be the morphisms such that
 $i_L\co p_L=\Pi_{H}^{L}$ and $p_{L}\co i_{L}=id_{H_L}$. 
Then the equalities (\ref{monoid-hl-1}), (\ref{monoid-hl-2})
 and (\ref{monoid-hl-3}) imply that $(H_L, \eta_{H_L}=p_L\co \eta_H, \mu_{H_L}=p_L\co \mu_H\co (i_L\ot i_L))$ is a monoid.
 Therefore we can consider the category of right (left) $H_L$-modules, denoted by $\mathcal C_{H_L}$ ($_{H_L}\mathcal C$).
 In particular, $(H, \phi_{H}^{L}=\mu_H\co (H\ot i_L))$ is in $\mathcal C_{H_L}$ and $(H, \varphi_{H}^{L}=\mu_H\co (i_L\ot H))$ is in $_{H_L}\mathcal C$.
Moreover, if $(M \phi_M)$ is a right $H_L$-module, we define $M\ot_{H_L}H$ by the following coequalizer diagram:

$$
\setlength{\unitlength}{1mm}
\begin{picture}(101.00,10.00)
\put(22.00,8.00){\vector(1,0){40.00}}
\put(22.00,4.00){\vector(1,0){40.00}}
\put(75.00,6.00){\vector(1,0){21.00}}
\put(43.00,11.00){\makebox(0,0)[cc]{$M\ot \varphi_{H}^{L}$ }}
\put(43.00,0.00){\makebox(0,0)[cc]{$\phi_M\ot H$ }}
\put(85.00,9.00){\makebox(0,0)[cc]{$n_M$ }}
\put(10.00,6.00){\makebox(0,0)[cc]{$ M\ot H_L\ot H$ }}
\put(70.00,6.00){\makebox(0,0)[cc]{$M\ot H$ }}
\put(105.00,6.00){\makebox(0,0)[cc]{$M\ot_{H_L}H$ }}
\end{picture}
$$

Similar considerations can be done for $H_R=Im (\Pi_{H}^{R})$.
}
\end{remark}

Now let $H$ be a unital magma and comonoid such that conditions (a1), (a2) and (a3) of Definition \ref{Weak-Hopf-quasigroup} hold. Define the morphisms, called $\Omega$-morphisms, 
$$\Omega_{L}^{1}=(\mu_H\ot H)\co (H\ot \Pi_{H}^{L}\ot H)\co (H\ot \delta_H),$$
$$\Omega_{R}^{1}=(\mu_H\ot H)\co (H\ot \Pi_{H}^{R}\ot H)\co (H\ot \delta_H),$$
$$\Omega_{L}^{2}=(H\ot \mu_H)\co (H\ot \Pi_{H}^{L}\ot H)\co (\delta_H\ot H)$$
and
$$\Omega_{R}^{2}=(H\ot \mu_H)\co (H\ot \Pi_{H}^{R}\ot H)\co (\delta_H\ot H).$$

By Proposition \ref{otherproperties}, it is not difficult to see that these morphisms are idempotent. As a consequence, there exist objects $H\times_{L}^{1} H$,
 $H\times_{R}^{1} H$, $H\times_{L}^{2} H$ and $H\times_{R}^{2} H$ and morphisms
$$q_{L}^{1}:H\ot H \rightarrow H\times_{L}^{1} H\;\;\;, j_{L}^{1}:H\times_{L}^{1} H\rightarrow H\ot H,$$
$$q_{R}^{1}:H\ot H \rightarrow H\times_{R}^{1} H\;\;\;, j_{R}^{1}:H\times_{R}^{1} H\rightarrow H\ot H,$$
$$q_{L}^{2}:H\ot H \rightarrow H\times_{L}^{2} H\;\;\;, j_{L}^{2}:H\times_{L}^{2} H\rightarrow H\ot H,$$
and
$$q_{R}^{2}:H\ot H \rightarrow H\times_{R}^{2} H\;\;\;, j_{R}^{2}:H\times_{R}^{2} H\rightarrow H\ot H,$$
such that, for $\sigma\in \{L,R\}$ and $\alpha \in \{1,2\}$,

\begin{equation}
\label{Omegas} j_{\sigma}^{\alpha}\co q_{\sigma}^{\alpha}=\Omega_{\sigma}^{\alpha} ,\;\;\; q_{\sigma}^{\alpha}\co j_{\sigma}^{\alpha}=id_{H\times_{\sigma}^{\alpha}H}.
\end{equation}

Finally, by conditions (\ref{monoid-hl-1}) and (\ref{monoid-hl-3}), it is easy to see that
\begin{equation}
\label{muconomega} 
(\mu_H\ot H)\co (H\ot \Omega_{\sigma}^{1})=(H\ot \Omega_{\sigma}^{1})\co (\mu_H\ot H)
\end{equation}
and
\begin{equation}
\label{omegaconmu}
(H\ot \mu_H)\co (\Omega_{\sigma}^{2}\ot H)=(\Omega_{\sigma}^{2}\ot H)\co (H\ot\mu_H), \sigma\in \{L,R\}.
\end{equation}

Note that the morphism $\Omega_{R}^{1}$ is the same that the one defined in \cite{AFG-Cleftwhq} by the name of $\nabla_H$. The following Lemma gives an explanation of the meaning of the objects 
$H\times_{L}^{1} H$, $H\times_{R}^{1} H$, $H\times_{L}^{2} H$ and $H\times_{R}^{2} H$ by using equalizer and coequalizer diagrams.

\begin{lemma}
\label{diagrams}
Let $H$ be a unital magma and comonoid such that conditions (a1), (a2) and (a3) of Definition \ref{Weak-Hopf-quasigroup} hold. Then we have that:
 
\begin{itemize}
\item[(i)] The diagrams
$$
\setlength{\unitlength}{1mm}
\begin{picture}(101.00,10.00)
\put(22.00,8.00){\vector(1,0){40.00}}
\put(22.00,4.00){\vector(1,0){40.00}}
\put(75.00,6.00){\vector(1,0){21.00}}
\put(43.00,11.00){\makebox(0,0)[cc]{$\phi_{H}^{L}\ot H$ }}
\put(43.00,0.00){\makebox(0,0)[cc]{$H\ot \varphi_{H}^{L}$ }}
\put(85.00,9.00){\makebox(0,0)[cc]{$q_{L}^{1}$ }}
\put(10.00,6.00){\makebox(0,0)[cc]{$ H\ot H_L\ot H$ }}
\put(70.00,6.00){\makebox(0,0)[cc]{$H\ot H$ }}
\put(105.00,6.00){\makebox(0,0)[cc]{$H\times_{L}^{1} H$ }}
\end{picture}
$$
and
$$
\setlength{\unitlength}{1mm}
\begin{picture}(101.00,10.00)
\put(22.00,8.00){\vector(1,0){40.00}}
\put(22.00,4.00){\vector(1,0){40.00}}
\put(75.00,6.00){\vector(1,0){21.00}}
\put(43.00,11.00){\makebox(0,0)[cc]{$\phi_{H}^{R}\ot H$ }}
\put(43.00,0.00){\makebox(0,0)[cc]{$H\ot \varphi_{H}^{R}$ }}
\put(85.00,9.00){\makebox(0,0)[cc]{$q_{R}^{2}$ }}
\put(10.00,6.00){\makebox(0,0)[cc]{$ H\ot H_R\ot H$ }}
\put(70.00,6.00){\makebox(0,0)[cc]{$H\ot H$ }}
\put(105.00,6.00){\makebox(0,0)[cc]{$H\times_{R}^{2} H$ }}
\end{picture}
$$
are coequalizer diagrams.
By Remark \ref{HsubLmonoide}, we have that $H\times_{L}^{1} H\cong H\ot_{H_L}H$ and $H\times_{R}^{2} H\cong H\ot_{H_R}H$.

\item[(ii)] The diagrams

$$
\setlength{\unitlength}{3mm}
\begin{picture}(30,4)
\put(3,2){\vector(1,0){4}}
\put(11,2.5){\vector(1,0){15}}
\put(11,1.5){\vector(1,0){15}}
\put(1,2){\makebox(0,0)[cc]{$H\times_{L}^{2} H$}}
\put(9,2){\makebox(0,0)[cc]{$H\ot H$}}
\put(30,2){\makebox(0,0)[cc]{$H\ot H_L\ot H$}}
\put(5.5,3){\makebox(0,0)[cc]{$j_{L}^{2}$}}
\put(19,3.5){\makebox(0,0)[cc]{$((H\ot p_L)\co \delta_{H})\ot H$}}
\put(19,0.5){\makebox(0,0)[cc]{$H\ot ((p_L\ot H)\co\delta_{H})$}}
\end{picture}
$$
and
$$
\setlength{\unitlength}{3mm}
\begin{picture}(30,4)
\put(3,2){\vector(1,0){4}}
\put(11,2.5){\vector(1,0){15}}
\put(11,1.5){\vector(1,0){15}}
\put(1,2){\makebox(0,0)[cc]{$H\times_{R}^{1} H$}}
\put(9,2){\makebox(0,0)[cc]{$H\ot H$}}
\put(30,2){\makebox(0,0)[cc]{$H\ot H_L\ot H$}}
\put(5.5,3){\makebox(0,0)[cc]{$j_{R}^{1}$}}
\put(19,3.5){\makebox(0,0)[cc]{$((H\ot p_R)\co \delta_{H})\ot H$}}
\put(19,0.5){\makebox(0,0)[cc]{$H\ot ((p_R\ot H)\co\delta_{H})$}}
\end{picture}
$$
are equalizer diagrams.
\end{itemize}

\end{lemma}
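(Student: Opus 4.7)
The plan is to exploit the splitting $\Omega_{\sigma}^{\alpha}=j_{\sigma}^{\alpha}\co q_{\sigma}^{\alpha}$ with $q_{\sigma}^{\alpha}\co j_{\sigma}^{\alpha}=id_{H\times_{\sigma}^{\alpha}H}$, so that $j_{\sigma}^{\alpha}$ is a split monomorphism and $q_{\sigma}^{\alpha}$ a split epimorphism. Verification of each (co)equalizer then breaks into two steps: first, check that the parallel pair is intertwined by $\Omega_{\sigma}^{\alpha}$; second, show that any morphism $f$ which (co)equalizes the pair automatically satisfies $f\co\Omega_{\sigma}^{\alpha}=f$ (in the coequalizer case) or $\Omega_{\sigma}^{\alpha}\co f=f$ (in the equalizer case). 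The key observation for the second step is that $\Omega_{L}^{\alpha}$ factors as $(\phi_{H}^{L}\ot H)\co(H\ot p_{L}\ot H)\co(H\ot\delta_{H})$ when $\alpha=1$, and as $(H\ot\varphi_{H}^{L})\co(((H\ot p_{L})\co\delta_{H})\ot H)$ when $\alpha=2$; substituting the (co)equalizing relation in this factorization reduces the assertion to $\mu_{H}\co(\Pi_{H}^{L}\ot H)\co\delta_{H}=\Pi_{H}^{L}\ast id_{H}=id_{H}$, which is (\ref{pi-l}) (and dually for $\sigma=R$).

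For part (i), it suffices to treat the first diagram (the second being symmetric under $L\leftrightarrow R$, $1\leftrightarrow 2$). By split idempotency, $q_{L}^{1}$ coequalizes the pair iff $\Omega_{L}^{1}\co(\phi_{H}^{L}\ot H)=\Omega_{L}^{1}\co(H\ot\varphi_{H}^{L})$. On a generic input $a\ot z\ot b\in H\ot H_{L}\ot H$, the first composite yields $(a\cdot i_{L}(z))\cdot\Pi_{H}^{L}(b_{(1)})\ot b_{(2)}$. For the second, (\ref{aux-1-monoid-hl}) combined with $\Pi_{H}^{L}\co i_{L}=i_{L}$ gives $\delta_{H}(i_{L}(z)\cdot b)=i_{L}(z)\cdot b_{(1)}\ot b_{(2)}$, producing $a\cdot\Pi_{H}^{L}(i_{L}(z)\cdot b_{(1)})\ot b_{(2)}$; then (\ref{pi-l-mu-pi-l}) together with (\ref{doblepiLmu}) yield $\Pi_{H}^{L}(i_{L}(z)\cdot b_{(1)})=i_{L}(z)\cdot\Pi_{H}^{L}(b_{(1)})$, and (\ref{monoid-hl-3}) reassociates $a\cdot(i_{L}(z)\cdot\Pi_{H}^{L}(b_{(1)}))=(a\cdot i_{L}(z))\cdot\Pi_{H}^{L}(b_{(1)})$. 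The universal property follows immediately: given $f$ coequalizing the pair, $f\co\Omega_{L}^{1}=f\co(H\ot\varphi_{H}^{L})\co(H\ot p_{L}\ot H)\co(H\ot\delta_{H})=f\co(H\ot(\Pi_{H}^{L}\ast id_{H}))=f$ by (\ref{pi-l}), and $\tilde{f}:=f\co j_{L}^{1}$ provides the unique factorization through $q_{L}^{1}$. The isomorphisms $H\times_{L}^{1}H\cong H\ot_{H_{L}}H$ and $H\times_{R}^{2}H\cong H\ot_{H_{R}}H$ then follow from the uniqueness of coequalizers together with Remark \ref{HsubLmonoide}.

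Part (ii) is dual. To see that $j_{L}^{2}$ equalizes $A:=((H\ot p_{L})\co\delta_{H})\ot H$ and $B:=H\ot((p_{L}\ot H)\co\delta_{H})$, it is enough to verify $A\co\Omega_{L}^{2}=B\co\Omega_{L}^{2}$ and then right-cancel the split epi $q_{L}^{2}$. Here $A\co\Omega_{L}^{2}$ computes to $a_{(1)}\ot p_{L}(a_{(2)})\ot\Pi_{H}^{L}(a_{(3)})\cdot b$ by coassociativity, while $B\co\Omega_{L}^{2}$ becomes $a_{(1)}\ot p_{L}(\Pi_{H}^{L}(a_{(2)})\cdot b_{(1)})\ot b_{(2)}$ via (\ref{aux-1-monoid-hl}); the equality of these two expressions is extracted by combining (\ref{delta-pi-l}) (which expresses $a_{(2)}\ot\Pi_{H}^{L}(a_{(3)})$ in terms of $\delta_{H}\co\eta_{H}$) with (\ref{pi-l-mu-pi-l}) and (\ref{doblepiLmu}). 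For the universal property, given $g:X\to H\ot H$ equalizing the pair, $\Omega_{L}^{2}\co g=(H\ot\varphi_{H}^{L})\co A\co g=(H\ot\varphi_{H}^{L})\co B\co g=(H\ot(\Pi_{H}^{L}\ast id_{H}))\co g=g$, so $\tilde{g}:=q_{L}^{2}\co g$ is the unique factorization through $j_{L}^{2}$; the second equalizer diagram is analogous. The main obstacle is precisely this step 1 identity in part (ii): because $\mu_{H}$ is not associative, transferring $\Pi_{H}^{L}$ from a comultiplicative position on one tensor factor to a multiplicative position on the other forces the simultaneous use of axioms (a1) and (a3) of Definition \ref{Weak-Hopf-quasigroup} together with the retraction identities (\ref{delta-pi-l}), (\ref{pi-l-mu-pi-l}), (\ref{doblepiLmu}) and the monoid-type relations (\ref{monoid-hl-1})--(\ref{monoid-hl-3}).
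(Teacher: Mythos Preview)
Your overall architecture is exactly that of the paper: reduce each (co)equalizer claim to the statement that $\Omega_{\sigma}^{\alpha}$ intertwines the parallel pair, and deduce the universal property from the splitting together with $\Pi_{H}^{L}\ast id_{H}=id_{H}$. Your treatment of part~(i) is correct and essentially matches the paper's argument; the paper establishes $\Omega_{L}^{1}\co(\phi_{H}^{L}\ot H)=\Omega_{L}^{1}\co(H\ot\varphi_{H}^{L})$ via (\ref{mu-pi-l}), (\ref{aux-1-monoid-hl}), (\ref{aux-2-monoid-hl}) and axiom~(a2), whereas you use (\ref{aux-1-monoid-hl}), (\ref{pi-l-mu-pi-l}), (\ref{doblepiLmu}) and (\ref{monoid-hl-3}); both chains are valid. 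The universal-property step is identical in the two proofs.

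The gap is in part~(ii), precisely at the point you flag as ``the main obstacle''. After your (correct) reductions you must show
\[
a_{(1)}\ot \Pi_{H}^{L}(a_{(2)})\ot \Pi_{H}^{L}(a_{(3)})\cdot b
\;=\;
a_{(1)}\ot \Pi_{H}^{L}\!\bigl(\Pi_{H}^{L}(a_{(2)})\cdot b_{(1)}\bigr)\ot b_{(2)},
\]
and you assert this ``is extracted by combining (\ref{delta-pi-l}) with (\ref{pi-l-mu-pi-l}) and (\ref{doblepiLmu})''. That combination does simplify the right-hand side to $a_{(1)}\ot \Pi_{H}^{L}(a_{(2)})\cdot\Pi_{H}^{L}(b_{(1)})\ot b_{(2)}$, but equating this with the left-hand side is not a consequence of those three identities alone. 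The paper's proof of this very equality is a ten-line chain that additionally uses (\ref{pi-l-barra-delta}), (\ref{pi-r-barra-delta}), (\ref{pi-delta-mu-pi-3}) and (\ref{mu-pi-l}); in particular the passage through $\overline{\Pi}_{H}^{L}$ and $\overline{\Pi}_{H}^{R}$ (via (\ref{pi-l-barra-delta}) and (\ref{pi-r-barra-delta})) is what allows one to move the $\Pi_{H}^{L}$ from the comultiplicative slot on $a$ to the multiplicative slot involving $b$. None of these four identities appears in your list, and your closing expansion to ``axioms (a1) and (a3) \ldots\ and (\ref{monoid-hl-1})--(\ref{monoid-hl-3})'' still omits them. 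So as written the argument for part~(ii) stops at the hardest step; to complete it you need to actually produce the chain (or an alternative), and the identities you invoke are not sufficient for that.
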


\begin{proof}

$(i).$ We will give the computations for the first diagram, the proof for the other is similar. First of all,

\begin{itemize}
\item[ ]$\hspace{0.38cm} \Omega_{L}^{1}\co (H\ot \varphi_{H}^{L})$
\item[ ]$=((\mu_H\co (H\ot \Pi_{H}^{L}))\ot H)\co (H\ot (\delta_H\co \mu_H\co (i_L\ot H)))$
\item[ ]$=((\mu_H\co (H\ot \Pi_{H}^{L}))\ot H)\co (H\ot ((\mu_H\ot H)\co (i_L\ot \delta_H)))$
\item[ ]$=((\varepsilon_H\co \mu_H\co (H\ot \mu_H))\ot H\ot H)\co (H\ot H\ot c_{H,H}\ot H)\co (H\ot c_{H,H}\ot \delta_H)\co (\delta_H\ot i_L\ot H)$
\item[ ]$=((\varepsilon_H\co \mu_H\co (\mu_H\ot H))\ot H\ot H)\co (H\ot H\ot c_{H,H}\ot H)\co (H\ot c_{H,H}\ot \delta_H)\co (\delta_H\ot i_L\ot H)$
\item[ ]$=((\varepsilon_H\co \mu_H)\ot H\ot H)\co \delta_{H\ot H}\co ((\mu_H\co (H\ot i_L))\ot H)$
\item[ ]$=\Omega_{L}^{1}\co (\phi_{H}^{L}\ot H),$
\end{itemize}
where the first equality follows by the definition of $\Omega_{L}^{1}$; the second one by (\ref{aux-1-monoid-hl}); in the third and the last ones we use (\ref{mu-pi-l}); the fourth one relies on (a2) of Definition \ref{Weak-Hopf-quasigroup}; finally, the fifth equality follows by (\ref{aux-2-monoid-hl}).

By composing on the left with $q_{L}^{1}$, we have that $q_{L}^{1}\co (H\ot \varphi_{H}^{L})=q_{L}^{1}\co (\phi_{H}^{L}\ot H)$. Now assume that $r:H\ot H\rightarrow Q$ is a morphism such that $r\co (H\ot \varphi_{H}^{L})=r\co (\phi_{H}^{L}\ot H)$. Then the morphism
 $r\co j_{L}^{1}:H\times_{L}^{1} H\rightarrow Q$ satisfies that
$$r\co j_{L}^{1}\co q_{L}^{1}=r\co \Omega_{L}^{1}=r\co ((\mu_H\co (H\ot (i_L\co p_L)))\ot H)\co (H\ot \delta_H)=r\co (H\ot (\Pi_{H}^{L}\ast id_H))=r,$$
and if $s:H\ot H\rightarrow Q$ is such that $s\co q_{L}^{1}=r$, then $s=s\co q_{L}^{1}\co j_{L}^{1}=r\co j_{L}^{1}$.

$(ii).$ We only give the computations for the first diagram. Composing on the right with $q_{L}^{2}$ and on the left whit $H\ot i_L\ot H$,  
\begin{itemize}
\item[ ]$\hspace{0.38cm} (H\ot \Pi_{H}^{L}\ot H)\co (H\ot \delta_H)\co \Omega_{L}^{2}$
\item[ ]$=(H\ot \Pi_{H}^{L}\ot H)\co (H\ot (\delta_H\co \mu_H\co (\Pi_{H}^{L}\ot H)))\co (\delta_H\ot H)$
\item[ ]$=(H\ot \Pi_{H}^{L}\ot H)\co (H\ot ((\mu_H\ot H)\co (\Pi_{H}^{L}\ot \delta_H)))\co (\delta_H\ot H)$
\item[ ]$=(H\ot (\Pi_{H}^{L}\co \mu_H\co (\Pi_{H}^{L}\ot \overline{\Pi}_{H}^{L}))\ot H)\co(\delta_H\ot \delta_H)$
\item[ ]$=(H\ot (\Pi_{H}^{L}\co \mu_H)\ot H)\co (H\ot \Pi_{H}^{L}\ot ((H\ot \mu_H)\co ((\delta_H\co \eta_H)\ot H)))\co (\delta_H\ot H)$
\item[ ]$=(H\ot \Pi_{H}^{L}\ot \mu_H)\co (H\ot ((H\ot \overline{\Pi}_{H}^{R})\co \delta_H\co \Pi_{H}^{L})\ot H)\co (\delta_H\ot H)$
\item[ ]$=(H\ot \Pi_{H}^{L}\ot \mu_H)\co (H\ot (\delta_H\co \Pi_{H}^{L})\ot H)\co (\delta_H\ot H)$
\item[ ]$=(H\ot \Pi_{H}^{L}\ot \mu_H)\co (H\ot (((\varepsilon_H\co \mu_H)\ot H)\co (H\ot c_{H,H})\co (\delta_H\ot H))\ot H\ot H)$
\item[ ]$\hspace{0.38cm} \co (H\ot H\ot c_{H,H}\ot H)\co (H\ot (\delta_H\co \eta_H)\ot H\ot H)\co (\delta_H\ot H)$
\item[ ]$=(H\ot (\Pi_{H}^{L}\co \mu_H)\ot \mu_H)\co (H\ot H\ot c_{H,H}\ot H)\co (H\ot (\delta_H\co \eta_H)\ot H\ot H)\co (\delta_H\ot H)$

\item[ ]$=(H\ot \Pi_{H}^{L}\ot \mu_H)\co (H\ot ((H\ot\Pi_{H}^{L})\co \delta_H)\ot H)\co (\delta_H\ot H)$
\item[ ]$=(H\ot \Pi_{H}^{L}\ot H)\co (\delta_H\ot H)\co \Omega_{L}^{2},$
\end{itemize}
where the first equality follows by the definition of $\Omega_{L}^{2}$; the second one by (\ref{aux-1-monoid-hl}); in the third one we use (\ref{pi-l-mu-pi-l}); the fourth equality relies on (\ref{pi-l-barra-delta}); the fifth one follows by (\ref{pi-r-barra-delta}); the sixth one by (\ref{pi-delta-mu-pi-3}); the seventh one uses coassociativity and the definition of $\Pi_{H}^{L}$, the eighth one relies on (\ref{mu-pi-l}); the ninth one uses (\ref{delta-pi-l}); finally, the last one follows by coassociativity.

As a consequence, $(H\ot ((p_L\ot H)\co \delta_H))\co j_{L}^{2}=(((H\ot p_L)\co \delta_H)\ot H)\co j_{L}^{2}$ and, if $r:Q\rightarrow H\ot H$ is a morphism such that
$(H\ot ((p_L\ot H)\co \delta_H))\co r=(((H\ot p_L)\co \delta_H)\ot H)\co r$, it is easy to see that the morphism $q_{L}^{2}\co r$ satisfies that $j_{L}^{2}\co q_{L}^{2}\co r=r$, and it is unique because, if $s:Q\rightarrow H\ot H$ is such that $j_{L}^{2}\co s=r$, then $s=q_{L}^{2}\co j_{L}^{2}\co s=q_{L}^{2}\co r$.
\end{proof}

The following definition is inspired in \cite{Brz}.

\begin{definition}
\label{almostlinear}
{\rm Let $H$ be a magma. We say that a morphism $\phi:H\ot H\rightarrow H\ot H$ is:
\begin{itemize}
\item[(i)] Almost left $H$-linear, if $\phi=(\mu_H\ot H)\co (H\ot \phi)\co (H\ot \eta_H\ot H)$.
\item[(ii)] Almost right $H$-linear, if $\phi=(H\ot \mu_H)\co (\phi\ot H)\co (H\ot \eta_H\ot H)$.
\end{itemize}
By dualization, if $H$ is a comagma, we will say that a morphism $\phi$ is almost left $H$-colinear if $\phi=(H\ot \varepsilon_H\ot H)\co (H\ot \phi)\co (\delta_H\ot H)$, and almost right $H$-colinear if $\phi=(H\ot \varepsilon_H\ot H)\co (\phi\ot H)\co (H\ot \delta_H)$.
}
\end{definition}

\begin{proposition}
\label{examplesalmost}
Let $H$ be a magma and comagma. The following assertions hold.
\begin{itemize}
\item[(i)] The right Galois morphism, defined as $\beta=(\mu_H\ot H)\co (H\ot \delta_H)$ is almost left $H$-linear and almost right $H$-colinear.
\item[(ii)] The left Galois morphism, defined as $\gamma=(H\ot \mu_H)\co (\delta_H\ot H)$ is almost right $H$-linear and almost left $H$-colinear.
\item[(iii)] The morphisms $\Omega_{L}^{1}$ and $\Omega_{R}^{1}$ are almost left $H$-linear and almost right $H$-colinear.
\item[(iv)] The morphisms $\Omega_{L}^{2}$ and $\Omega_{R}^{2}$ are almost right $H$-linear and almost left $H$-colinear.
\end{itemize}
Moreover, if $H$ is a unital magma and comonoid such that conditions (a1), (a2) and (a3) of Definition \ref{Weak-Hopf-quasigroup} hold:
\begin{itemize}
\item[(v)] The morphism $\Omega_{L}^{1}$ is almost right $H$-linear and it is almost left $H$-colinear if and only if $\Pi_{H}^{L}=\overline{\Pi}_{H}^{L}$.
\item[(vi)] The morphism $\Omega_{R}^{1}$ is almost left $H$-colinear and it is almost right $H$-linear if and only if $\overline{\Pi}_{H}^{L}=\Pi_{H}^{R}$.
\item[(vii)] The morphism $\Omega_{L}^{2}$ is almost right $H$-colinear and it is almost left $H$-linear if and only if $\Pi_{H}^{L}=\overline{\Pi}_{H}^{R}$.
\item[(viii)] The morphism $\Omega_{R}^{2}$ is almost left $H$-linear and it is almost right $H$-linear if and only if $\Pi_{H}^{R}=\overline{\Pi}_{H}^{R}$.
\end{itemize}
\end{proposition}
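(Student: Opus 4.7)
I would split the eight assertions into routine cases (i)--(iv) and equivalence arguments (v)--(viii). The almost-linearity and almost-colinearity of $\beta$ and $\gamma$ in (i) and (ii) follow from short diagrammatic calculations invoking only the unit axiom, the counit axiom, and coassociativity; conditions (a1)--(a3) are not needed. For (iii) and (iv), the almost-linearity of each $\Omega_\sigma^\alpha$ is obtained by precomposing (\ref{muconomega}) or (\ref{omegaconmu}) with $(H \ot \eta_H \ot H)$ and applying the unit axiom, while the matching almost-colinearity is a direct calculation: unfold the definition of $\Omega_\sigma^\alpha$, apply coassociativity, and collapse with the counit axiom.

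For (v)--(viii) I would treat each of the four cases with the same two-step pattern. In the backward direction, assume the equality between the two candidate projections (for instance, $\Pi_H^L = \overline{\Pi}_H^L$ in (v)); substitute this into the definition of the relevant $\Omega_\sigma^\alpha$ and apply one of the identities (\ref{mu-pi-l})--(\ref{pi-r-barra-delta}) or (\ref{pi-composition-2}) to verify the desired almost-(co)linearity. In the forward direction, precompose the given almost-(co)linearity identity with $(\eta_H \ot H)$ or $(H \ot \eta_H)$, collapse one tensor factor using (\ref{unidadpi}), and then apply a suitable counit $\varepsilon_H$ to the remaining factor to extract precisely the required equality between the two projections.

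The main obstacle is the bookkeeping across the eight subcases. Each $\Omega_\sigma^\alpha$ carries two (co)linearity properties not already established by (iii) or (iv), and for each one must identify which identity of Proposition \ref{propertieswithouta4} either forces the property to hold unconditionally---as in (vi), where $\overline{\Pi}_H^L \co \Pi_H^R = \Pi_H^R$ from (\ref{pi-composition-2}) renders the almost left $H$-colinearity of $\Omega_R^1$ automatic---or reduces it to the stated $\Pi$-equality. Choosing the correct boundary evaluation ($h=\eta_H$ versus $y=\eta_H$) and the correct side of the counit application in each subcase is where the casework concentrates.
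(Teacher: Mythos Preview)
Your proposal is correct and matches the paper's approach: (i)--(iv) are dismissed there as ``easy to see,'' and for (v)--(viii) the paper performs exactly the reduction you describe, computing the almost-(co)linearity expression via identities such as (\ref{mu-pi-l}), (\ref{mu-pi-l-var}), (\ref{pi-l-barra-delta}), (\ref{delta-pi-r}), and (\ref{pi-composition-2}) to the form $(\mu_H\ot H)\co (H\ot \overline{\Pi}_{H}^{\bullet}\ot H)\co (H\ot \delta_H)$ (or its dual) and then comparing with the definition of $\Omega_\sigma^\alpha$; your extraction step (precompose with $\eta_H$ on one side, apply $\varepsilon_H$ on the other) is left implicit in the paper but is exactly what is needed to close the biconditional.

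One small caveat: for the almost-linear half of (iii)--(iv) you invoke (\ref{muconomega}) and (\ref{omegaconmu}), which in the paper are obtained from (\ref{monoid-hl-1}) and (\ref{monoid-hl-3}) and hence rest on (a1)--(a3), whereas (iii)--(iv) are stated under the bare magma/comagma hypothesis. The direct unfolding you already use for the colinear half works equally well for the linear half---e.g.\ $(\mu_H\ot H)\co(H\ot \Omega_L^1)\co(H\ot\eta_H\ot H)=(\mu_H\ot H)\co(H\ot(\mu_H\co(\eta_H\ot\Pi_H^L))\ot H)\co(H\ot\delta_H)=\Omega_L^1$ by the unit axiom alone---and avoids this mismatch.
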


\begin{proof}

It is easy to see assertions (i)-(iv). As far as (v), we get the almost right $H$-linearity by using (\ref{pi-l-barra-delta}) and (\ref{pi-composition-2}). Indeed,
$$(H\ot \mu_H)\co (\Omega_{L}^{1}\ot H)\co (H\ot \eta_H\ot H)=(\mu_H\ot H)\co (H\ot (\Pi_{H}^{L}\co\overline{\Pi}_{H}^{L})\ot H)\co (H\ot \delta_H)=\Omega_{L}^{1}.$$
On the other hand, using (\ref{mu-pi-l}) and (\ref{mu-pi-l-var}),
$$(H\ot \varepsilon_H\ot H)\co (H\ot \Omega_{L}^{1})\co (\delta_H\ot H)=(H\ot (\varepsilon_H\co \mu_H)\ot H)\co (\delta_H\ot \delta_H)=
(\mu_H\ot H)\co (H\ot \overline{\Pi}_{H}^{L}\ot H)\co (H\ot \delta_H),$$
and as a consequence we have that $\Omega_{L}^{1}$ is almost left $H$-colinear if and only if $\Pi_{H}^{L}=\overline{\Pi}_{H}^{L}$.

To get (vi), the morphism $\Omega_{R}^{1}$ is almost left $H$-colinear because by (\ref{mu-pi-l-var}) and (\ref{pi-composition-2}),
\begin{itemize}
\item[ ]$\hspace{0.38cm} (H\ot \varepsilon_H\ot H)\co (H\ot \Omega_{R}^{1})\co (\delta_H\ot H)=(H\ot (\varepsilon_H\co \mu_H)\ot H)\co (\delta_H\ot ((\Pi_{H}^{R}\ot H)\co\delta_H))$
\item[ ]$=(\mu_H\ot H)\co (H\ot (\overline{\Pi}_{H}^{L}\co \Pi_{H}^{R})\ot H)\co (H\ot \delta_H)=\Omega_{R}^{1}.$
\end{itemize}

Moreover, using (\ref{delta-pi-r}) and (\ref{pi-l-barra-delta}),
$$(H\ot \mu_H)\co (\Omega_{R}^{1}\ot H)\co (H\ot \eta_H\ot H)=(\mu_H\ot \mu_H)\co (H\ot (\delta_H\co \eta_H)\ot H)=(\mu_H\ot H)\co (H\ot \overline{\Pi}_{H}^{L}\ot H)\co (H\ot \delta_H),$$
and then $\Omega_{R}^{1}$ is almost right $H$-linear if and only if $\overline{\Pi}_{H}^{L}=\Pi_{H}^{R}$. We leave to the reader the proofs for (vii) and (viii). 
\end{proof}

\begin{remark}
{\rm
Note that, as we showed in Propositions 1.5. and 1.6. of \cite{AFGLV} (the proofs do not use associativity nor the antipode), 
$\Pi_{H}^{L}=\overline{\Pi}_{H}^{L}$ iff $\Pi_{H}^{R}=\overline{\Pi}_{H}^{R}$, and $\overline{\Pi}_{H}^{L}=\Pi_{H}^{R}$ iff $\Pi_{H}^{R}=\overline{\Pi}_{H}^{L}$.
Therefore, the morphism $\Omega_{L}^{1}$ is almost left $H$-colinear if and only if $\Omega_{R}^{2}$ is almost right $H$-linear (that is the case, for example, if $H$ is coconmutative, i.e., $\delta_H=c_{H,H}\co \delta_H$), and the morphism $\Omega_{R}^{1}$ is almost right $H$-linear if and only if $\Omega_{L}^{2}$ is almost left $H$-linear (for example, if $H$ is commutative, i. e., $\mu_H=\mu_H\co c_{H,H}$).
}
\end{remark}

\begin{remark}
{\rm Note that, if $H$ is a weak Hopf quasigroup, we can express the $\Omega$-morphisms as compositions of the Galois maps. Actually, by (a4) we have that
\begin{equation}
\label{equalitiesomega}
\Omega_{L}^{1}=\overline{\beta}\co \beta ,\;\;\;\Omega_{R}^{1}=\beta\co\overline{\beta} ,\;\;\;\Omega_{L}^{2}=\gamma\co\overline{\gamma},\;\;\;\Omega_{R}^{2}=\overline{\gamma}\co \gamma,  
\end{equation}
where $\overline{\beta}=(\mu_H\ot H)\co (H\ot \lambda_H\ot H)\co (H\ot \delta_H)$ and $\overline{\gamma}=(H\ot\mu_H)\co (H\ot \lambda_H\ot H)\co (\delta_H\ot H)$.
Moreover, if the weak Hopf quasigroup $H$ is a Hopf quasigroup,  $\Pi_{H}^{L}=\Pi_{H}^{R}=\overline{\Pi}_{H}^{L}=\overline{\Pi}_{H}^{R}=\varepsilon_H\ot\eta_H$ and then the $\Omega$-morphism are identities. As a consequence we have that in this case the Galois maps $\beta$ and $\gamma$ are isomorphisms with inverses $\overline{\beta}$ and $\overline{\gamma}$, respectively.
}
\end{remark}

Now we give the main result of this paper, which characterizes weak Hopf quasigrous in terms of a composition involving the Galois maps.

\begin{theorem}
\label{characterization}
Let $H$ be a unital magma and comonoid such that conditions (a1), (a2) and (a3) of Definition \ref{Weak-Hopf-quasigroup} hold. The following assertions are equivalent:

\begin{itemize}
\item[(i)] $H$ is a weak Hopf quasigroup.
\item[(ii)] The morphisms $f=q_{R}^{1}\co \beta\co j_{L}^{1}:H\times_{L}^{1} H\rightarrow H\times_{R}^{1} H$ and $g=q_{L}^{2}\co \gamma\co j_{R}^{2}:H\times_{R}^{2} H\rightarrow H\times_{L}^{2} H$ are isomorphisms, the morphism $j_{L}^{1}\co f^{-1}\co q_{R}^{1}$ is almost left $H$-linear, and $j_{R}^{2}\co g^{-1}\co q_{L}^{2}$ is almost right $H$-linear.
\end{itemize}

\end{theorem}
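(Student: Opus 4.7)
The plan is to prove both implications.

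For $(i)\Rightarrow(ii)$, with the antipode of $H$ available, I propose $f^{-1}:=q_L^1\co\overline{\beta}\co j_R^1$ and $g^{-1}:=q_R^2\co\overline{\gamma}\co j_L^2$, where $\overline{\beta}$ and $\overline{\gamma}$ are the antipode-twisted Galois morphisms from the remark preceding the theorem. Using the identities (\ref{equalitiesomega}) and the idempotence of the $\Omega$-morphisms encoded in (\ref{Omegas}),
$$f\co f^{-1}=q_R^1\co\beta\co\Omega_L^1\co\overline{\beta}\co j_R^1=q_R^1\co\Omega_R^1\co\Omega_R^1\co j_R^1=id,$$
and symmetrically $f^{-1}\co f=id$; the argument for $g$ is analogous using $\Omega_L^2=\gamma\co\overline{\gamma}$ and $\Omega_R^2=\overline{\gamma}\co\gamma$. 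Moreover, $j_L^1\co f^{-1}\co q_R^1=\Omega_L^1\co\overline{\beta}$, and its almost left $H$-linearity reduces, after expanding the definitions, to the reassociation identity (\ref{monoid-hl-3}). Dually, the almost right $H$-linearity of $j_R^2\co g^{-1}\co q_L^2=\overline{\gamma}\co\Omega_L^2$ follows from (\ref{monoid-hl-2}).

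For $(ii)\Rightarrow(i)$, set $\alpha:=j_L^1\co f^{-1}\co q_R^1$ and $\alpha':=j_R^2\co g^{-1}\co q_L^2$; by hypothesis these are almost left (resp.\ right) $H$-linear. The inverse relations, after suitable pre- and post-composition with the $j$'s and $q$'s, translate into
$$\Omega_R^1\co\beta\co\alpha=\Omega_R^1,\qquad \alpha\co\beta\co\Omega_L^1=\Omega_L^1,$$
together with the analogous identities for $\alpha'$ and $\gamma$. I then define
$$\lambda_H:=(H\ot\varepsilon_H)\co\alpha\co(\eta_H\ot H)$$
(equivalently $\lambda_H=(\varepsilon_H\ot H)\co\alpha'\co(H\ot\eta_H)$, after checking these coincide). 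Axioms (a4-1) and (a4-2) are then obtained by applying the appropriate counit projections to the inverse relations and invoking (\ref{mu-pi-l})--(\ref{mu-pi-r-var}); axiom (a4-3) follows from (\ref{pi-delta-mu-pi-3}) and the idempotence of $\Pi_H^L$ and $\Pi_H^R$. For the non-associative axioms (a4-4)--(a4-7), the crucial input is that almost left $H$-linearity of $\alpha$ gives $\alpha=(\mu_H\ot H)\co(H\ot\alpha_1)$ with $\alpha_1:=\alpha\co(\eta_H\ot H)$, so that $\Omega_R^1\co\beta\co\alpha=\Omega_R^1$ becomes a pointwise identity on $H\ot H$; evaluating via $\varepsilon_H$ and invoking Proposition \ref{propertieswithouta4} yields (a4-4), and (a4-6) is obtained from the symmetric computation with $\alpha'$. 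The axioms (a4-5) and (a4-7) are derived from the "other" inverse relations $\alpha\co\beta\co\Omega_L^1=\Omega_L^1$ and its analogue for $\alpha'$.

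The main obstacle is the direction $(ii)\Rightarrow(i)$, and within it the non-associative axioms (a4-4)--(a4-7). Invertibility of $f$ and $g$ alone produces only quotient-level identities on $H\times_{L}^{1} H$ and $H\times_{R}^{2} H$; it is precisely the almost linearity hypotheses that convert these into the triple-product identities on $H\ot H$ demanded by (a4). Consequently the principal task is a careful bookkeeping exercise combining the almost linearity identities with Proposition \ref{propertieswithouta4}, keeping track of how $\delta_H\co\eta_H$ interacts with the $\Omega$-morphisms via (a3).
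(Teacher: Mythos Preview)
Your outline for $(i)\Rightarrow(ii)$ matches the paper's: the same candidates $f^{-1}$, $g^{-1}$ and the same cancellation via (\ref{equalitiesomega}). (In fact $\Omega_L^1\co\overline{\beta}=\overline{\beta}$, so almost left $H$-linearity is immediate.)

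For $(ii)\Rightarrow(i)$ there is a genuine gap. You never invoke the \emph{colinearity} identities
\[
(H\ot\delta_H)\co\alpha=(\alpha\ot H)\co(H\ot\delta_H),\qquad
(\delta_H\ot H)\co\alpha'=(H\ot\alpha')\co(\delta_H\ot H),
\]
which the paper derives from $(\beta\ot H)\co(H\ot\delta_H)=(H\ot\delta_H)\co\beta$ (coassociativity) together with $j_R^1\co f\co q_L^1=\beta$. These are indispensable: almost left $H$-linearity of $\alpha$ only tells you that $(H\ot\varepsilon_H)\co\alpha=\mu_H\co(H\ot\lambda_H)$, which is enough to turn $\alpha\co\beta\co\Omega_L^1=\Omega_L^1$ into (a4-6). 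But the relation $\Omega_R^1\co\beta\co\alpha=\Omega_R^1$, after your reduction, only yields $\mu_H\co(\mu_H\ot H)\co(H\ot\alpha_1)=\mu_H\co(H\ot\Pi_H^R)$, and to rewrite $\alpha_1$ as $(\lambda_H\ot H)\co\delta_H$ and thus obtain (a4-7) you need precisely the first colinearity identity. The same identity is what gives $\lambda_H\ast id_H=\Pi_H^R$; without it you only have (a4-1) for $\lambda_H$ and (a4-2) for $\overline{\lambda_H}$.

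Relatedly, your assignment of axioms is inverted: $\alpha$ (almost \emph{left} $H$-linear) produces the identities (a4-6)--(a4-7), in which the antipode sits on the \emph{right} of the triple product, while $\alpha'$ produces (a4-4)--(a4-5). Finally, the equality of your two candidate antipodes is not a preliminary check but the last step of the proof: the paper first establishes (a4-4)--(a4-5) for $\overline{\lambda_H}$ and (a4-6)--(a4-7) for $\lambda_H$, and only then shows $\lambda_H=\lambda_H\ast\Pi_H^L=\overline{\lambda_H}\ast\Pi_H^L=\overline{\lambda_H}$ by chaining these identities together.
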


\begin{proof}
$(i)\Rightarrow(ii).$ Assume that $H$ is a weak Hopf quasigroup. Define $f^{-1}=q_{L}^{1}\co\overline{\beta}\co j_{R}^{1}$ and $g^{-1}=q_{R}^{2}\co\overline{\gamma}\co j_{L}^{2}$. Then $f^{-1}$ and $g^{-1}$ are the inverses of $f$ and $g$, respectively. Indeed,

$$f\co f^{-1}=q_{R}^{1}\co \beta\co \Omega_{L}^{1}\co\overline{\beta}\co j_{R}^{1}
=q_{R}^{1}\co \beta\co \overline{\beta}\co \beta\co \overline{\beta}\co j_{R}^{1}=q_{R}^{1}\co \Omega_{R}^{1}\co \Omega_{R}^{1}\co j_{R}^{1}
=q_{R}^{1}\co \Omega_{R}^{1}\co j_{R}^{1}=q_{R}^{1}\co j_{R}^{1}\co q_{R}^{1}\co j_{R}^{1}=id_{H\times_{R}^{1} H}.$$

On the other hand,

$$ f^{-1}\co f=q_{L}^{1}\co \overline{\beta}\co \Omega_{R}^{1}\co\beta\co j_{L}^{1}
=q_{L}^{1}\co \overline{\beta}\co \beta\co \overline{\beta}\co \beta\co j_{L}^{1}
=q_{L}^{1}\co \Omega_{L}^{1}\co \Omega_{L}^{1}\co j_{L}^{1}=q_{L}^{1}\co \Omega_{L}^{1}\co j_{L}^{1}
=q_{L}^{1}\co j_{L}^{1}\co q_{L}^{1}\co j_{L}^{1}=id_{H\times_{L}^{1} H},$$

and then $f^{-1}$ is the inverse of $f$. In a similar way it is easy to see that $g^{-1}$ is the inverse of $g$.
To see the almost left and right $H$-linearity, we will see that $j_{L}^{1}\co f^{-1}\co q_{R}^{1}=\overline{\beta}$ and $j_{R}^{2}\co g^{-1}\co q_{R}^{2}=\overline{\gamma}$. We only show the first equality, the second one follows a similar pattern. Indeed, using the definition of $f^{-1}$, the idempotent character of $\Omega_{L}^{1}$, equality (\ref{monoid-hl-2}) for $\Pi_{H}^{R}$, coassociativity and (a4-3) of Definition \ref{Weak-Hopf-quasigroup}, we obtain that
\begin{itemize}
\item[ ]$\hspace{0.38cm} j_{L}^{1}\co f^{-1}\co q_{R}^{1}=\Omega_{L}^{1}\co \overline{\beta}\co \Omega_{R}^{1}
=\overline{\beta}\co \beta\co\overline{\beta}\co \beta\co\overline{\beta}=\overline{\beta}\co \beta\co\overline{\beta}=\overline{\beta}\co \Omega_{R}^{1}$
\item[ ]$=((\mu_H\co (\mu_H\ot H)\co (H\ot \Pi_{H}^{R}\ot H))\ot H)\co (H\ot H\ot ((\lambda_H\ot H)\co \delta_H))\co (H\ot \delta_H)$
\item[ ]$=(\mu_H\ot H)\co (H\ot (\Pi_{H}^{R}\ast \lambda_H)\ot H)\co (H\ot \delta_H)=\overline{\beta}.$
\end{itemize}
$(ii)\Rightarrow (i).$ First of all, note that 
\begin{equation}
\label{betaandgammaexpressions}
j_{R}^{1}\co f\co q_{L}^{1}=\beta\;\;\; j_{L}^{2}\co g\co q_{R}^{2}=\gamma .
\end{equation}

Indeed,
\begin{itemize}
\item[ ]$\hspace{0.38cm} j_{R}^{1}\co f\co q_{L}^{1}$
\item[ ]$=(\mu_H\ot H)\co (H\ot \Pi_{H}^{R}\ot H)\co (H\ot \delta_H)\co (\mu_H\ot H)\co (H\ot \delta_H)\co (\mu_H\ot H)\co (H\ot \Pi_{H}^{L}\ot H)\co (H\ot \delta_H)$
\item[ ]$=(\mu_H\ot H)\co (H\ot \Pi_{H}^{R}\ot H)\co (H\ot \delta_H)\co (\mu_H\ot H)\co (H\ot (\Pi_{H}^{L}\ast id_H)\ot H)\co (H\ot \delta_H)$
\item[ ]$=(\mu_H\ot H)\co (H\ot \Pi_{H}^{R}\ot H)\co (H\ot \delta_H)\co (\mu_H\ot H)\co (H\ot \delta_H)$
\item[ ]$=(\mu_H\ot H)\co (H\ot (id_H\ast \Pi_{H}^{R})\ot H)\co (H\ot \delta_H)$
\item[ ]$=\beta,$
\end{itemize}
where the first equalitiy follows by the definitions of $f$, $\Omega_{L}^{1}$ and $\Omega_{R}^{1}$; the second and the fourth ones by (\ref{monoid-hl-2}) and (\ref{monoid-hl-3}); and in the third and the last ones we use (a4-3) of Definition \ref{Weak-Hopf-quasigroup}.
In a similar way, we get the second equality. As a consequence, we obtain the following expressions for $\mu_H$ and $\delta_H$:
\begin{equation}
\label{muexpression}
\mu_H=(H\ot \varepsilon_H)\co j_{R}^{1}\co f\co q_{L}^{1}=(\varepsilon_H\ot H)\co j_{L}^{2}\co g\co q_{R}^{2}.
\end{equation}
\begin{equation}
\label{deltaexpression}
\delta_H=j_{R}^{1}\co f\co q_{L}^{1}\co(\eta_H\ot H)=j_{L}^{2}\co g\co q_{R}^{2}\co (H\ot \eta_H).
\end{equation}
Now define $\lambda_H=(H\ot \varepsilon_H)\co j_{L}^{1}\co f^{-1}\co q_{R}^{1}\co (\eta_H\ot H)$ and $\overline{\lambda_H}=(\varepsilon_H\ot H)\co j_{R}^{2}\co g^{-1}\co q_{L}^{2} \co (H\ot \eta_H)$. To obtain that $H$ is a weak Hopf quasigroup we will show that $\lambda_H=\overline{\lambda_H}$ and they satisfy (a4) of Definition \ref{Weak-Hopf-quasigroup}. We begin showing that $id_H\ast \lambda_H=\Pi_{H}^{L}$. Indeed, by the almost left $H$-linearity and (\ref{deltaexpression}),

\begin{itemize}
\item[ ]$\hspace{0.38cm} id_H\ast \lambda_H=(H\ot \varepsilon_H)\co (\mu_H\ot H)\co (H\ot (j_{L}^{1}\co f^{-1}\co q_{R}^{1}\co (\eta_H\ot H)))\co \delta_H$
\item[ ]$=(H\ot \varepsilon_H)\co j_{L}^{1}\co f^{-1}\co q_{R}^{1}\co \delta_H=(H\ot \varepsilon_H)\co j_{L}^{1}\co f^{-1}\co q_{R}^{1}\co j_{R}^{1}\co f\co q_{L}^{1}\co(\eta_H\ot H)$
\item[ ]$=(H\ot \varepsilon_H)\co \Omega_{L}^{1}\co (\eta_H\ot H)=\Pi_{H}^{L}.$
\end{itemize}
In a similar way, but using the almost right $H$-linearity, we get that $\overline{\lambda_H}\ast id_H=\Pi_{H}^{R}$.
On the other hand, note that $(\beta\ot H)\co (H\ot \delta_H)=(H\ot \delta_H)\co \beta$ holds and, by (\ref{betaandgammaexpressions}) it is easy to see that
\begin{equation}
\label{betaequality}
(H\ot \delta_H)\co j_{L}^{1}\co f^{-1}\co q_{R}^{1}=((j_{L}^{1}\co f^{-1}\co q_{R}^{1})\ot H)\co (H\ot \delta_H).
\end{equation}
Moreover, taking into account that $(H\ot\gamma)\co (\delta_H\ot H)=(\delta_H\ot H)\co \gamma$, we get
\begin{equation}
\label{gammaequality}
(\delta_H\ot H)\co j_{R}^{2}\co g^{-1}\co q_{L}^{2}=(H\ot (j_{R}^{2}\co g^{-1}\co q_{L}^{2}))\co (\delta_H\ot H).
\end{equation}
Therefore, using (\ref{muexpression}), we obtain that
\begin{itemize}
\item[ ]$\hspace{0.38cm} \lambda_H\ast id_H=\mu_H\co (H\ot ((\varepsilon_H\ot H)\co \delta_H)\co j_{L}^{1}\co f^{-1}\co q_{R}^{1}\co (\eta_H\ot H)$
\item[ ]$=(H\ot \varepsilon_H)\co j_{R}^{1}\co f\co q_{L}^{1}\co j_{L}^{1}\co f^{-1}\co q_{R}^{1}\co (\eta_H\ot H)
=(H\ot \varepsilon_H)\co \Omega_{R}^{1}\co (\eta_H\ot H)=\Pi_{H}^{R},$
\end{itemize}
and by similar computations, but using (\ref{gammaequality}), we have that $id_H\ast \overline{\lambda_H}=\Pi_{H}^{L}$.

To get (a4-3) of Definition \ref{Weak-Hopf-quasigroup}, 
$$\lambda_H\ast \Pi_{H}^{L}=\mu_H\co (H\ot \Pi_{H}^{L})\co j_{L}^{1}\co f^{-1}\co q_{R}^{1}\co (\eta_H\ot H)
=(H\ot \varepsilon_H)\co \Omega_{L}^{1}\co j_{L}^{1}\co f^{-1}\co q_{R}^{1}\co (\eta_H\ot H)=\lambda_H,$$

where the first equality follows by almost right $H$-linearity; the second one because $H$ is a comonoid; in the third one we use that $\mu_H\co (H\ot \Pi_{H}^{L})=(H\ot \varepsilon_H)\co \Omega_{L}^{1}$; finally, the last one follows because $\Omega_{L}^{1}\co j_{L}^{1}=j_{L}^{1}$. By similar computations but using almost left $H$-linearity and that $(\Pi_{H}^{L}\ot H)\co \delta_H=\Omega_{R}^{1}\co (\eta_H\ot H)$, it is not difficult to see that $\Pi_{H}^{R}\ast \lambda_H=\lambda_H$, and the same ideas can be used to show that $\overline{\lambda_H}= \overline{\lambda_H}\ast \Pi_{H}^{L}$.
Now we prove (a4-4)-(a4-7) of Definition \ref{Weak-Hopf-quasigroup}. Firstly, by almost right $H$-linearity and (\ref{betaandgammaexpressions})

\begin{itemize}
\item[ ]$\hspace{0.38cm} \mu_H\circ (\overline{\lambda_H}\ot \mu_H)\circ (\delta_H\ot H)
=(\varepsilon_H\ot H)\co ((H\ot \mu_H)\co ((j_{R}^{2}\co g^{-1}\co q_{L}^{2})\ot H)\co (H\ot \eta_H\ot H)\co \gamma$
\item[ ]$=(\varepsilon_H\ot H)\co j_{R}^{2}\co g^{-1}\co q_{L}^{2}\co j_{L}^{2}\co g\co q_{R}^{2}=(\varepsilon_H\ot H)\co \Omega_{R}^{2}
=\mu_{H}\co (\Pi_{H}^{R}\ot H),$
\end{itemize}

 and using almost right $H$-linearity, (\ref{gammaequality}) and (\ref{muexpression}),

\begin{itemize}
\item[ ]$\hspace{0.38cm} \mu_H\circ (H\ot \mu_H)\co (H\ot \overline{\lambda_H}\ot H)\co (\delta_H\ot H)
=\mu_H\co (H\ot \varepsilon_H\ot H)\co (H\ot (j_{R}^{2}\co g^{-1}\co q_{L}^{2}))\co (\delta_H\ot H)$
\item[ ]$=\mu_H\co (((H\ot\varepsilon_H)\co \delta_H)\ot H)\co j_{R}^{2}\co g^{-1}\co q_{L}^{2}
=(\varepsilon_H\ot H)\co j_{L}^{2}\co g\co q_{R}^{2} \co j_{R}^{2}\co g^{-1}\co q_{L}^{2}=(\varepsilon_H\ot H)\co \Omega_{L}^{2}$
\item[ ]$=\mu_{H}\co (\Pi_{H}^{L}\ot H).$
\end{itemize}

By similar ideas but using almost left $H$-linearity and (\ref{betaequality}), we show that

$$\mu_H\co (\mu_H\ot \lambda_H)\co (H\ot \delta_H)=\mu_{H}\co (H\ot \Pi_{H}^{L}),$$
and
$$\mu_H\co (\mu_H\ot H)\co (H\ot \lambda_H\ot H)\co (H\ot \delta_H)=\mu_{H}\co (H\ot \Pi_{H}^{R}).$$

To finish the proof, it only remains to see that $\lambda_H=\overline{\lambda_H}$. Indeed,

\begin{itemize}
\item[ ]$\hspace{0.38cm} \lambda_H=\lambda_H\ast \Pi_{H}^{L}
=\mu_H\co (H\ot \Pi_{H}^{L})\co (\lambda_H\ot H)\co \delta_H
=\mu_H\co (\mu_H\ot \lambda_H)\co (H\ot \delta_H)\co (\lambda_H\ot H)\co \delta_H$
\item[ ]$=\mu_H\co (\Pi_{H}^{R}\ot H)\co (H\ot \lambda_H)\co \delta_H=\mu_H\co (\overline{\lambda_H}\ot \mu_H)\co (\delta_H\ot H)\co (H\ot \lambda_H)\co \delta_H
=\overline{\lambda_H}\ast \Pi_{H}^{L}=\overline{\lambda_H},$
\end{itemize}

and the proof is complete.
\end{proof}

As we have said in the Introduction, the notion of weak Hopf quasigroup generalizes the ones of Hopf quasigroups and weak Hopf algebras. To finish this section we particularize our main theorem in these settings. Note that the first result is the assertion $(1)$ of the Theorem 2.5. (called the first fundamental theorem for Hopf (co)quasigroups) given by Brzezinski in \cite{Brz}.

\begin{corollary}
\label{corolarioHqg}
Let $H$ be a unital magma and comonoid such that $\varepsilon_H$ and $\delta_H$ are morphisms of unital magmas (equivalently, $\eta_H$ and $\mu_H$ are morphisms of counital comagmas). Then $H$ is a Hopf quasigroup if and only if the right and left Galois morphisms $\beta$ and $\gamma$ are isomorphisms and they have almost left $H$-linear and almost right $H$-linear inverses, respectively.
\end{corollary}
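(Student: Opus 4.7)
The plan is to deduce this corollary by specialising Theorem \ref{characterization} to the present, more restrictive, hypothesis. To start, I would invoke the remark immediately following Definition \ref{Weak-Hopf-quasigroup}: if $\varepsilon_H$ and $\delta_H$ are morphisms of unital magmas, then conditions (a1), (a2) and (a3) of that definition hold automatically, and moreover
\[
\Pi_{H}^{L}=\Pi_{H}^{R}=\overline{\Pi}_{H}^{L}=\overline{\Pi}_{H}^{R}=\eta_{H}\co\varepsilon_{H}.
\]
Thus the standing hypotheses of Theorem \ref{characterization} are met.

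Next, I would check that the four objects $H\times_{\sigma}^{\alpha}H$ appearing in Theorem \ref{characterization} collapse to $H\ot H$ in this setting. Substituting $\Pi_{H}^{L}=\Pi_{H}^{R}=\eta_{H}\co\varepsilon_{H}$ into the definitions of the $\Omega$-morphisms and using the counit and unit axioms, one obtains
\[
\Omega_{L}^{1}=\Omega_{R}^{1}=\Omega_{L}^{2}=\Omega_{R}^{2}=id_{H\ot H},
\]
a fact already recorded in the second remark after Proposition \ref{examplesalmost}. Hence one may take $H\times_{\sigma}^{\alpha}H=H\ot H$ together with $q_{\sigma}^{\alpha}=j_{\sigma}^{\alpha}=id_{H\ot H}$ for all $\sigma\in\{L,R\}$ and $\alpha\in\{1,2\}$. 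Consequently the comparison maps of Theorem \ref{characterization} become $f=\beta$ and $g=\gamma$, and the almost-linearity conditions on $j_{L}^{1}\co f^{-1}\co q_{R}^{1}$ and $j_{R}^{2}\co g^{-1}\co q_{L}^{2}$ translate precisely into the almost left $H$-linearity of $\beta^{-1}$ and the almost right $H$-linearity of $\gamma^{-1}$.

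Finally, I would verify that in this situation being a weak Hopf quasigroup coincides with being a Hopf quasigroup in the sense of Klim and Majid. When $\Pi_{H}^{L}=\Pi_{H}^{R}=\eta_{H}\co\varepsilon_{H}$, axioms (a4-1) and (a4-2) of Definition \ref{Weak-Hopf-quasigroup} reduce to the convolution identities $id_{H}\ast\lambda_{H}=\lambda_{H}\ast id_{H}=\eta_{H}\co\varepsilon_{H}$, axiom (a4-3) becomes automatic from the unit and counit axioms, and (a4-4)--(a4-7) recover exactly the four antipode identities defining a Hopf quasigroup in \cite{Majidesfera}. Combining this equivalence with the previous paragraph and Theorem \ref{characterization} then yields the corollary. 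I do not foresee a genuine obstacle here; the only verification of substance is the identification $\Omega_{\sigma}^{\alpha}=id_{H\ot H}$, which is a short and essentially mechanical computation.
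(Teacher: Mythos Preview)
Your proposal is correct and follows essentially the same route as the paper: one observes that under the stronger hypothesis all four $\Pi$-morphisms collapse to $\eta_{H}\co\varepsilon_{H}$, so each $\Omega_{\sigma}^{\alpha}$ is the identity, whence $f=\beta$, $g=\gamma$, and Theorem \ref{characterization} yields the result. The paper's proof is terser (it does not spell out the reduction of (a4-1)--(a4-7) to the Hopf-quasigroup axioms, relying instead on the remark after Definition \ref{Weak-Hopf-quasigroup}), but the argument is the same.
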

\begin{proof}
First of all, note that conditions (a2) and (a3) of Definition \ref{Weak-Hopf-quasigroup} trivialize because $\varepsilon_H$ and $\delta_H$ are morphisms of unital magmas. Moreover, $\Pi_{H}^{L}=\Pi_{H}^{R}=\overline{\Pi}_{H}^{L}=\overline{\Pi}_{H}^{R}=\varepsilon_H\ot\eta_H$ and then the $\Omega$-morphisms are identities. As a consequence, $f=\beta$ and $g=\gamma$.
\end{proof}

As far as weak Hopf algebras, we will prove that it is possible to remove the conditions about almost $H$-linearity. First we need to show the following technical Lemma:

\begin{lemma}
\label{fbaixaporvarphi}
Let $H$ be a unital magma and comonoid such that conditions (a1), (a2) and (a3) of Definition \ref{Weak-Hopf-quasigroup} hold. Let $f$ and $g$ be the maps defined in Theorem \ref{characterization} and define the morphisms:
\begin{equation}
\label{expressionsvarphi-1}
\varphi_{H\times_{R}^{1}H}=q_{R}^{1}\co (\mu_H\ot H)\co (H\ot j_{R}^{1}),\;\;\;\varphi_{H\times_{L}^{1} H}=q_{L}^{1}\co (\mu_H\ot H)\co (H\ot j_{L}^{1}),
\end{equation}
\begin{equation}
\label{expressionsvarphi-2}
\psi_{H\times_{R}^{2}H}=q_{R}^{2}\co (H\ot \mu_H)\co (j_{R}^{2}\ot H),\;\;\;\psi_{H\times_{L}^{2}H}=q_{L}^{2}\co (H\ot \mu_H)\co (j_{L}^{2}\ot H).
\end{equation}
 Then the following assertions are equivalent:
\begin{itemize}
\item[(i)] $H$ is a monoid.
\item[(ii)] The morphism $f$ satisfies that $f\co \varphi_{H\times_{L}^{1} H}=\varphi_{H\times_{R}^{1}H}\co (H\ot f)$.
\item[(iii)] The morphism $g$ satisfies that $g\co \psi_{H\times_{R}^{2} H}=\psi_{H\times_{L}^{2} H}\co (g\ot H)$.
\end{itemize}

\end{lemma}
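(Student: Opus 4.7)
The plan is to prove (i)~$\Leftrightarrow$~(ii) in detail, and then obtain (i)~$\Leftrightarrow$~(iii) by the left--right symmetric argument in which $\gamma$, $\Omega_R^2$, $\Omega_L^2$ and right multiplications replace $\beta$, $\Omega_L^1$, $\Omega_R^1$ and left multiplications respectively.

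As a preliminary I would verify three morphism identities that hold under (a1)--(a3) alone: $\beta\co \Omega_L^1 = \beta$, $\Omega_R^1\co \beta = \beta$, and $\Omega_R^1\co (\mu_H\ot H)\co (H\ot \beta) = (\mu_H\ot H)\co (H\ot \beta)$. Each unfolds to a short calculation combining one of the equalities (\ref{monoid-hl-2})--(\ref{monoid-hl-3}) in Proposition \ref{otherproperties}, coassociativity of $\delta_H$, and the relation $\Pi_H^L\ast id_H = id_H\ast \Pi_H^R = id_H$ from (\ref{pi-l}). In particular the first two yield $j_R^1\co f\co q_L^1 = \Omega_R^1\co \beta\co \Omega_L^1 = \beta$. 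Since $H\ot q_L^1$ is a split epimorphism (with section $H\ot j_L^1$) and $q_R^1\co j_R^1 = id$, the equation (ii) is equivalent to
\begin{equation*}
\beta\co (\mu_H\ot H)\co (H\ot \Omega_L^1)\;=\;(\mu_H\ot H)\co (H\ot \beta)\qquad(\mathrm{E})
\end{equation*}
as morphisms $H\ot H\ot H \rightarrow H\ot H$; indeed, composing (ii) with $j_R^1$ on the left and $H\ot q_L^1$ on the right and applying the three auxiliary identities reduces each side as shown, and one recovers (ii) using $q_R^1\co j_R^1 = id$ together with the epi property of $H\ot q_L^1$.

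For (i)~$\Rightarrow$~(ii), associativity of $\mu_H$ at once gives $\beta\co (\mu_H\ot H) = (\mu_H\ot H)\co (H\ot \beta)$, and precomposing with $(H\ot \Omega_L^1)$ combined with $\beta\co \Omega_L^1 = \beta$ produces (E), hence (ii). The main obstacle is the converse (ii)~$\Rightarrow$~(i). The crux is to show, \emph{without} any associativity hypothesis, that the left-hand side of (E) already collapses to $\beta\co (\mu_H\ot H)$. In Sweedler notation, the LHS sends $(c,a,b)$ to $(c(a\Pi_H^L(b_1)))b_{21}\ot b_{22}$; I would first apply (\ref{monoid-hl-3}) to rewrite $c(a\Pi_H^L(b_1)) = (ca)\Pi_H^L(b_1)$, then (\ref{monoid-hl-2}) to obtain $(ca)(\Pi_H^L(b_1) b_{21})\ot b_{22}$, and finally invoke coassociativity of $\delta_H$ and $\Pi_H^L\ast id_H = id_H$ to simplify $\Pi_H^L(b_1) b_{21}\ot b_{22}$ back to $b_1\ot b_2$, arriving at $(ca) b_1\ot b_2 = \beta\co (\mu_H\ot H)(c,a,b)$. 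Consequently (E) becomes the identity $\beta\co (\mu_H\ot H) = (\mu_H\ot H)\co (H\ot \beta)$, and composing with $H\ot \varepsilon_H$---using $(H\ot \varepsilon_H)\co \beta = \mu_H$---produces $\mu_H\co (\mu_H\ot H) = \mu_H\co (H\ot \mu_H)$, so $H$ is a monoid.
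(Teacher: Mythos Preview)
Your proposal is correct and follows essentially the same route as the paper. Both arguments hinge on the identities $\beta\co\Omega_L^1=\beta$ and $\Omega_R^1\co\beta=\beta$, reduce condition~(ii) (via the split mono $j_R^1$ and split epi $H\ot q_L^1$) to the equation $\beta\co(\mu_H\ot H)=(\mu_H\ot H)\co(H\ot\beta)$, and finish by composing with $H\ot\varepsilon_H$. The only cosmetic differences are that you isolate the intermediate equation~(E) explicitly and carry out the simplification of $\beta\co(\mu_H\ot H)\co(H\ot\Omega_L^1)$ in Sweedler notation using (\ref{monoid-hl-2})--(\ref{monoid-hl-3}) directly, whereas the paper invokes the packaged identity~(\ref{muconomega}) for the same step.
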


\begin{proof}
$(i)\Rightarrow (ii).$  Assume that $H$ is a monoid. Then,

\begin{itemize}
\item[ ]$\hspace{0.38cm} f\co \varphi_{H\times_{L}^{1} H}=q_{R}^{1}\co \beta\co \Omega_{L}^{1}\co (\mu_H\ot H)\co (H\ot j_{L}^{1})=q_{R}^{1}\co \beta\co (\mu_H\ot H)\co (H\ot j_{L}^{1})$
\item[ ]$=q_{R}^{1}\co (\mu_H\ot H)\co (H\ot \beta)\co (H\ot j_{L}^{1})=q_{R}^{1}\co (\mu_H\ot H)\co (H\ot (\Omega_{R}^{1}\co\beta))\co (H\ot j_{L}^{1})=\varphi_{H\times_{R}^{1}H}\co (H\ot f),$
\end{itemize}

where the first and the last equalities are consequences of (\ref{expressionsvarphi-1}); the second and the fourth ones rely on (\ref{muconomega}). Finally, the third equality follows because $H$ is associative and then $(\mu_H\ot H)\co (H\ot \beta)=\beta\co (\mu_H\ot H)$.

To get $(ii)\Rightarrow (i)$, we will show that $(\mu_H\ot H)\co (H\ot \beta)=\beta\co (\mu_H\ot H)$. By composing with $H\ot \varepsilon_H$
 we obtain that $H$ is associative. First of all, note that, by (\ref{monoid-hl-2}) and (\ref{pi-l}),
$$\beta\co \Omega_{L}^{1}=(\mu_H\ot H)\co (H\ot (\Pi_{H}^{L}\ast id_H)\ot H)\co (H\ot \delta_H)=\beta,$$
and in a similar way but using (\ref{monoid-hl-2}) for $\Pi_{H}^{R}$ we get that $\Omega_{R}^{1}\co \beta=\beta$. Then,
 by (\ref{expressionsvarphi-1}) and (\ref{muconomega}),

$$ f\co \varphi_{H\times_{L}^{1} H}=q_{R}^{1}\co \beta\co \Omega_{L}^{1}\co (\mu_H\ot H)\co (H\ot j_{L}^{1})=q_{R}^{1}\co \beta\co (\mu_H\ot H)\co (H\ot j_{L}^{1})$$
and
$$ \varphi_{H\times_{R}^{1}H}\co (H\ot f)=q_{R}^{1}\co (\mu_H\ot H)\co (H\ot \Omega_{R}^{1})\co (H\ot (\beta\co j_{L}^{1}))=q_{R}^{1}\co (\mu_H\ot H)\co (H\ot (\beta\co j_{L}^{1})).$$

Composing with $j_{R}^{1}$ on the left and with $H\ot q_{L}^{1}$ on the right, and using (\ref{muconomega}) and (\ref{omegaconmu}) we obtain that $(\mu_H\ot H)\co (H\ot \beta)=\beta\co (\mu_H\ot H)$.

The proof for the equivalence between $(i)$ and $(iii)$ is similar and we leave the details to the reader.
\end{proof}

Now we can give our characterization for weak Hopf algebras. Note that the equivalence between $(i)$ and $(ii)$ is the result given by Schauenburg in \cite{S} (Theorem 6.1).
\begin{corollary}
\label{corolarioHweak}
Let $H$ be a monoid and comonoid such that conditions (a1), (a2) and (a3) of Definition \ref{Weak-Hopf-quasigroup} hold. The following assertions are equivalent.
\begin{itemize}
\item[(i)] $H$ is a weak Hopf algebra.
\item[(ii)] The morphism $f$ defined in Theorem \ref{characterization} is an isomorphism.
\item[(ii)] The morphism $g$ defined in Theorem \ref{characterization} is an isomorphism.
\end{itemize}
 \end{corollary}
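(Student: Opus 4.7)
The implications (i) $\Rightarrow$ (ii) and (i) $\Rightarrow$ (iii) come for free from Theorem \ref{characterization}, since a weak Hopf algebra is, by definition, a weak Hopf quasigroup whose multiplication is associative. The content of the statement is therefore the two converse directions, and the novelty is that the almost linearity clauses of Theorem \ref{characterization} become automatic once $\mu_H$ is associative.

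I will spell out (ii) $\Rightarrow$ (i); the implication (iii) $\Rightarrow$ (i) is entirely analogous after swapping $f$ for $g$ and the morphisms $\varphi$ for $\psi$. Assume that $f$ is an isomorphism. Since $H$ is a monoid, Lemma \ref{fbaixaporvarphi} gives $f\co \varphi_{H\times_{L}^{1} H}=\varphi_{H\times_{R}^{1}H}\co (H\ot f)$, which rearranges into $\varphi_{H\times_{L}^{1} H}\co (H\ot f^{-1})=f^{-1}\co \varphi_{H\times_{R}^{1}H}$. Unfolding (\ref{expressionsvarphi-1}), composing on the left with $j_{L}^{1}$ and on the right with $H\ot q_{R}^{1}$, and using (\ref{muconomega}) together with the straightforward identities $\Omega_{L}^{1}\co j_{L}^{1}=j_{L}^{1}$ and $q_{R}^{1}\co \Omega_{R}^{1}=q_{R}^{1}$, I expect to arrive at $(\mu_H\ot H)\co (H\ot \Phi)=\Phi\co (\mu_H\ot H)$, where $\Phi:=j_{L}^{1}\co f^{-1}\co q_{R}^{1}$. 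Precomposing with $H\ot \eta_H\ot H$ and invoking the unit axiom then yields exactly the almost left $H$-linearity of $\Phi$ required by Theorem \ref{characterization}.

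With this in place I can define $\lambda_H=(H\ot \varepsilon_H)\co \Phi\co (\eta_H\ot H)$ and reuse those parts of the proof of (ii) $\Rightarrow$ (i) of Theorem \ref{characterization} that only involve $f$: they produce $id_H\ast \lambda_H=\Pi_{H}^{L}$, $\lambda_H\ast id_H=\Pi_{H}^{R}$ and $\lambda_H\ast \Pi_{H}^{L}=\Pi_{H}^{R}\ast \lambda_H=\lambda_H$. What remains are the axioms (a4-4)--(a4-7) of Definition \ref{Weak-Hopf-quasigroup}, which in the original argument were extracted from the auxiliary antipode $\overline{\lambda_H}$ built out of $g$. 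In our associative setting these are not new information: for instance (a4-4) reads $\mu_H\co (\lambda_H\ot \mu_H)\co (\delta_H\ot H)=\mu_H\co (\Pi_{H}^{R}\ot H)$, and associativity of $\mu_H$ rewrites the left-hand side as $\mu_H\co ((\lambda_H\ast id_H)\ot H)$, which by (a4-2) coincides with the right-hand side; the other three identities collapse onto (a4-1) or (a4-2) by the same rebracketing trick.

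The main obstacle is the first step, namely squeezing the almost linearity of $\Phi$ out of the bilinearity statement of Lemma \ref{fbaixaporvarphi}; once that bridge is built, the corollary reduces to the machinery already contained in Theorem \ref{characterization} together with the elementary observation that, in the presence of associativity, half of axiom (a4) collapses onto the other half.
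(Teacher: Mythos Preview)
Your proposal is correct and follows essentially the same route as the paper: both use Lemma \ref{fbaixaporvarphi} to transport the $\varphi$-compatibility through $f^{-1}$, then combine (\ref{muconomega}) with $\Omega_L^1\co j_L^1=j_L^1$ and $q_R^1\co \Omega_R^1=q_R^1$ to obtain the almost left $H$-linearity of $j_L^1\co f^{-1}\co q_R^1$, after which the antipode $\lambda_H$ is built and (a4-4)--(a4-7) collapse under associativity. The only cosmetic difference is that you pass through the full left $H$-linearity $(\mu_H\ot H)\co (H\ot \Phi)=\Phi\co (\mu_H\ot H)$ before specializing with $H\ot \eta_H\ot H$, whereas the paper computes the almost linearity directly; and you spell out the rebracketing argument for (a4-4)--(a4-7), which the paper leaves as the remark ``by associativity of $H$, conditions (a4-4)--(a4-7) trivialize''.
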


\begin{proof}
By Theorem \ref{characterization}, $(i)\Rightarrow (ii)$ and $(i)\Rightarrow (iii)$. To get $(ii)\Rightarrow (i)$, we will begin by showing that, if $f$ is an isomorphism, the morphism $j_{L}^{1}\co f^{-1}\co q_{R}^{1}$ is always almost left $H$-linear. Indeed, note that by Lemma \ref{fbaixaporvarphi}, $f\co \varphi_{H\times_{L}^{1} H}=\varphi_{H\times_{R}^{1}H}\co (H\ot f)$. By the suitable compositions, we obtain that 
$\varphi_{H\times_{L}^{1} H}\co (H\ot f^{-1})=f^{-1}\co\varphi_{H\times_{R}^{1}H}$ and then
\begin{itemize}
\item[ ]$\hspace{0.38cm} (\mu_H\ot H)\co (H\ot (j_{L}^{1}\co f^{-1}\co q_{R}^{1}))\co (H\ot \eta_H\ot H)
=\Omega_{L}^{1}\co(\mu_H\ot H)\co (H\ot (j_{L}^{1}\co f^{-1}\co q_{R}^{1}))\co (H\ot \eta_H\ot H)$
\item[ ]$=j_{L}^{1}\co \varphi_{H\times_{L}^{1} H}\co (H\ot (f^{-1}\co q_{R}^{1}))\co (H\ot \eta_H\ot H)
=j_{L}^{1}\co f^{-1}\co\varphi_{H\times_{R}^{1}H}\co (H\ot q_{R}^{1})\co (H\ot \eta_H\ot H)$
\item[ ]$=j_{L}^{1}\co f^{-1}\co q_{R}^{1}\co (\mu_H\ot H)\co (H\ot \Omega_{R}^{1})\co (H\ot \eta_H\ot H)=j_{L}^{1}\co f^{-1}\co q_{R}^{1}\co\Omega_{R}^{1}=j_{L}^{1}\co f^{-1}\co q_{R}^{1}.$
\end{itemize}
Now we can follow the proof given in Theorem \ref{characterization} to see that the morphism $\lambda_H=(H\ot \varepsilon_H)\co j_{L}^{1}\co f^{-1}\co q_{R}^{1}\co (\eta_H\ot H)$ is the antipode of $H$ (in this case, by associativity of $H$, conditions (a4-4)-(a4-7) of Definition \ref{Weak-Hopf-quasigroup} trivialize).
The proof for $(iii)\Rightarrow (i)$ follows a similar pattern and we leave the details to the reader. 

\end{proof}

\section{A characterization for weak Hopf coquasigroups}
The notions of weak Hopf quasigroup and weak Hopf coquasigroup are entirely dual, i.e., we can obtain one of them by reversing arrows in the definition of the other. As a consequence, by dualizing the results given in the previous Section we get a characterization for weak Hopf coquasigroups. The proofs follow the same ideas, and in order to brevity they will be omitted.
First of all we introduce the notion of weak Hopf coquasigroup.

\begin{definition}
\label{Weak-Hopf-coquasigroup} {\rm  A weak Hopf coquasigroup $H$   in ${\mathcal
C}$ is a monoid $(H, \eta_H, \mu_H)$ and a counital comagma $(H,\varepsilon_H, \delta_H)$ such that the following axioms hold:
\begin{itemize}
\item[(b1)] $\delta_{H}\co \mu_{H}=(\mu_{H}\ot \mu_{H})\co \delta_{H\ot H}.$
\item[(b2)] $\varepsilon_{H}\co \mu_{H}\co (\mu_{H}\ot H)=((\varepsilon_{H}\co \mu_{H})\ot (\varepsilon_{H}\co \mu_{H}))\co (H\ot \delta_{H}\ot H)$ 
\item[ ]$=((\varepsilon_{H}\co \mu_{H})\ot (\varepsilon_{H}\co \mu_{H}))\co (H\ot (c_{H,H}^{-1}\co\delta_{H})\ot H).$
\item[b3)]$(\delta_{H}\ot H)\co \delta_{H}\co \eta_{H}=(H\ot \delta_{H})\co \delta_{H}\co \eta_{H}=(H\ot \mu_{H}\ot H)\co ((\delta_{H}\co \eta_{H}) \ot (\delta_{H}\co \eta_{H}))$
\item[ ]$=(H\ot (\mu_{H}\co c_{H,H}^{-1})\ot H)\co ((\delta_{H}\co \eta_{H}) \ot (\delta_{H}\co \eta_{H})).$
\item[(b4)] There exists $\lambda_{H}:H\rightarrow H$ in ${\mathcal C}$ (called the antipode of $H$) such that, if we denote by $\Pi_{H}^{L}$ (target morphism) and by $\Pi_{H}^{R}$ (source morphism) the morphisms 
$$\Pi_{H}^{L}=((\varepsilon_{H}\co\mu_{H})\ot H)\co (H\ot c_{H,H})\co ((\delta_{H}\co \eta_{H})\ot H),$$
$$\Pi_{H}^{R}=(H\ot(\varepsilon_{H}\co \mu_{H}))\co (c_{H,H}\ot H)\co (H\ot (\delta_{H}\co \eta_{H})),$$
then:
\begin{itemize}
\item[(b4-1)] $\Pi_{H}^{L}=id_{H}\ast \lambda_{H}.$
\item[(b4-2)] $\Pi_{H}^{R}=\lambda_{H}\ast id_{H}.$
\item[(b4-3)] $\lambda_{H}\ast \Pi_{H}^{L}=\Pi_{H}^{R}\ast \lambda_{H}= \lambda_{H}.$
\item[(b4-4)] $(\mu_H\ot H)\co (\lambda_H\ot \delta_H)\co \delta_H=(\Pi_{H}^{R}\ot H)\co \delta_H.$
\item[(b4-5)] $(\mu_H\ot H)\co (H\ot \lambda_H\ot H)\co (H\ot \delta_H)\co \delta_H=(\Pi_{H}^{L}\ot H)\co \delta_H.$
\item[(b4-6)] $(H\ot \mu_H)\co (\delta_H\ot \lambda_H)\co \delta_H=(H\ot \Pi_{H}^{L})\co \delta_H.$
\item[(b4-7)] $(H\ot \mu_H)\co (H\ot \lambda_H\ot H)\co (\delta_H\ot H)\co \delta_H=\mu_{H}\co (H\ot \Pi_{H}^{R}).$
\end{itemize}
\end{itemize}

Note that, if  $\eta_H$ and $\mu_H$ are  morphisms of counital comagmas, (equivalently, $\varepsilon_{H}$, $\delta_{H}$ are morphisms of unital magmas), $\Pi_{H}^{L}=\Pi_{H}^{R}=\eta_{H}\ot \varepsilon_{H}$ and, as a consequence, we have the notion of Hopf coquasigroup.
}
\end{definition}

Note that, when reversing arrows, the morphisms $\Pi_{H}^{L}$ and $\Pi_{H}^{R}$ are exactly the same of the previous section, while the morphism $\overline{\Pi}_{H}^{L}$ changes in $\overline{\Pi}_{H}^{R}$ and vice versa. As far as the $\Omega$-morphisms, we must change $\Omega_{L}^{1}$, $\Omega_{R}^{1}$, $\Omega_{L}^{2}$ and $\Omega_{R}^{2}$ by $\Omega_{L}^{2}$, $\Omega_{R}^{2}$, $\Omega_{L}^{1}$ and $\Omega_{R}^{1}$, respectively. Therefore the characterization of weak Hopf coquasigroups is the given by the following result:

\begin{theorem}
\label{characterizationcoquasi}
Let $H$ be a monoid and counital comagma such that conditions (b1), (b2) and (b3) of Definition \ref{Weak-Hopf-coquasigroup} hold. The following assertions are equivalent:
\begin{itemize}
\item[(i)] $H$ is a weak Hopf coquasigroup.
\item[(ii)] The morphisms $h=q_{R}^{2}\co \gamma\co j_{L}^{2}:H\times_{L}^{2} H\rightarrow H\times_{R}^{2} H$ and 
$s=q_{L}^{1}\co \beta\co j_{R}^{1}:H\times_{R}^{1} H\rightarrow H\times_{L}^{1} H$ are isomorphisms. Moreover, the morphism $j_{L}^{2}\co h^{-1}\co q_{R}^{2}$ is almost left $H$-colinear and $j_{R}^{1}\co s^{-1}\co q_{L}^{1}$ is almost right $H$-colinear.
\end{itemize}

\end{theorem}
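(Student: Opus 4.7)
The plan is to dualize the proof of Theorem \ref{characterization} line by line. Under arrow reversal, a unital magma becomes a counital comagma and a comonoid becomes a monoid, so the hypotheses (b1)--(b3) correspond exactly to (a1)--(a3). The morphisms $\Pi_{H}^{L}$ and $\Pi_{H}^{R}$ are self-dual in their construction, whereas $\overline{\Pi}_{H}^{L}$ and $\overline{\Pi}_{H}^{R}$ interchange, and the pair $(\beta,\gamma)$ swap roles. Consequently the two families of $\Omega$-morphisms swap ($\Omega_{L}^{1}\leftrightarrow\Omega_{L}^{2}$ and $\Omega_{R}^{1}\leftrightarrow\Omega_{R}^{2}$), and every ``almost linear'' condition must become an ``almost colinear'' one. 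This is exactly why the right Galois-type composites to inspect are $h=q_{R}^{2}\co\gamma\co j_{L}^{2}$ and $s=q_{L}^{1}\co\beta\co j_{R}^{1}$, rather than the pair $(f,g)$ of the previous section.

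For $(i)\Rightarrow(ii)$, I would put $h^{-1}=q_{L}^{2}\co\overline{\gamma}\co j_{R}^{2}$ and $s^{-1}=q_{R}^{1}\co\overline{\beta}\co j_{L}^{1}$, where $\overline{\beta}$ and $\overline{\gamma}$ are the twisted maps built from $\lambda_{H}$ as before. The dual analogues of (\ref{equalitiesomega}), now reading $\gamma\co\overline{\gamma}=\Omega_{R}^{2}$, $\overline{\gamma}\co\gamma=\Omega_{L}^{2}$, and mirror versions for $\beta$, follow from axioms (b4-4)--(b4-7) together with Propositions \ref{propertieswithouta4}--\ref{otherproperties} (whose statements do not use associativity of $\mu_{H}$ or coassociativity of $\delta_{H}$ dually). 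The checks $h\co h^{-1}=id=h^{-1}\co h$ (and likewise for $s$) are then formally identical to the quasigroup case, reducing to the splitting identities (\ref{Omegas}). Almost left $H$-colinearity of $j_{L}^{2}\co h^{-1}\co q_{R}^{2}$ and almost right $H$-colinearity of $j_{R}^{1}\co s^{-1}\co q_{L}^{1}$ follow by identifying these composites with $\overline{\gamma}$ and $\overline{\beta}$ respectively, and then applying parts (ii) and (i) of the dualized Proposition \ref{examplesalmost}.

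For $(ii)\Rightarrow(i)$, I would build the antipode by the dualized formulas $\lambda_{H}=(\varepsilon_{H}\ot H)\co j_{L}^{2}\co h^{-1}\co q_{R}^{2}\co (H\ot\eta_{H})$ and its partner $\overline{\lambda_{H}}=(H\ot\varepsilon_{H})\co j_{R}^{1}\co s^{-1}\co q_{L}^{1}\co (\eta_{H}\ot H)$. The convolution identities (b4-1), (b4-2) then arise by applying $\varepsilon_{H}$ on the appropriate side to the relations $j_{L}^{2}\co h^{-1}\co q_{R}^{2}\co j_{R}^{2}\co h\co q_{L}^{2}=\Omega_{L}^{2}$ and its twin, replacing the ``almost linearity'' step in Theorem \ref{characterization} by the now available ``almost colinearity'' step. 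Identity (b4-3) and the analogous assertions for $\overline{\lambda_{H}}$ are obtained as in the quasigroup case. The coaxioms (b4-4)--(b4-7), which apply a coproduct first and a product second, are verified by repeating the computation of (a4-4)--(a4-7) from Theorem \ref{characterization} with the dual intertwinings $(\delta_{H}\ot H)\co\gamma=(H\ot\gamma)\co(\delta_{H}\ot H)$ and its $\beta$-analogue in place of (\ref{betaequality})--(\ref{gammaequality}). Finally $\lambda_{H}=\overline{\lambda_{H}}$ is proved by the same convolution chain as at the end of Theorem \ref{characterization}.

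The only real obstacle is bookkeeping: one must match each $\Omega_{\sigma}^{\alpha}$, each $\overline{\Pi}$, and each intertwining with its image under arrow reversal, and place every ``colinear'' replacement on the correct side. In particular, the step where the quasigroup proof uses the intertwining of $f^{-1}$ with $\mu_{H}$ must be replaced by the intertwining of $h^{-1}$ with $\delta_{H}$, and similarly for $g^{-1}$ and $s^{-1}$. Once this translation dictionary is fixed, no new conceptual ingredient is required beyond those of Theorem \ref{characterization}.
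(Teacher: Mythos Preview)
Your proposal takes the same route as the paper: the paper explicitly states that the proof is obtained by dualizing Theorem \ref{characterization} and omits all details, and your outline is precisely such a dualization, with the correct dictionary ($\mu\leftrightarrow\delta$, $\beta\leftrightarrow\gamma$, $\Omega_{\sigma}^{1}\leftrightarrow\Omega_{\sigma}^{2}$, almost linear $\leftrightarrow$ almost colinear).

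One bookkeeping slip to flag: the dual analogues of (\ref{equalitiesomega}) in the coquasigroup setting are $\gamma\co\overline{\gamma}=\Omega_{L}^{2}$ and $\overline{\gamma}\co\gamma=\Omega_{R}^{2}$ (the list (\ref{equalitiesomega}) is self-dual as a whole), not the swapped versions you wrote. Concretely, using associativity of $\mu_{H}$ and (b4-6) one gets $\gamma\co\overline{\gamma}=\Omega_{L}^{2}$, and using (b4-7) one gets $\overline{\gamma}\co\gamma=\Omega_{R}^{2}$. With your swapped labels the check of $h\co h^{-1}=id$ ``works'' formally because the two errors cancel, but when you actually write it out you will need the correct identities together with the duals of the absorption relations $\beta\co\Omega_{L}^{1}=\beta$, $\Omega_{R}^{1}\co\beta=\beta$ (and their $\overline{\gamma}$-analogues derived from (b4-3)) to reduce $\gamma\co\Omega_{L}^{2}\co\overline{\gamma}$ and $\overline{\gamma}\co\Omega_{R}^{2}\co\gamma$. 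This is exactly the kind of issue you anticipated under ``the only real obstacle is bookkeeping'', so just be careful to keep the $L$/$R$ labels straight when you execute the details.
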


When particularizing to Hopf coquasigroups, we get the assertion $(2)$ of the Theorem 2.5 given by Brzezinski in \cite{Brz}.
\begin{corollary}
\label{corolarioHcoqg}
Let $H$ be a monoid and counital comagma such that $\varepsilon_H$ and $\delta_H$ are morphisms of unital magmas (equivalently, $\eta_H$ and $\mu_H$ are morphisms of counital comagmas). Then $H$ is a Hopf coquasigroup if and only if the right and left Galois morphisms $\beta$ and $\gamma$ are isomorphisms and they have almost right $H$-colinear and almost left $H$-colinear inverses, respectively.
\end{corollary}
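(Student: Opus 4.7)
The plan is to reduce the statement to Theorem \ref{characterizationcoquasi}, in exactly the same way that Corollary \ref{corolarioHqg} was reduced to Theorem \ref{characterization}. First I would note that, under the hypothesis that $\eta_H$ and $\mu_H$ are morphisms of counital comagmas (equivalently, $\varepsilon_H$ and $\delta_H$ are morphisms of unital magmas), axioms (b2) and (b3) of Definition \ref{Weak-Hopf-coquasigroup} trivialize: both sides of (b2) collapse to $\varepsilon_H\ot\varepsilon_H\ot\varepsilon_H$ and both sides of (b3) to $\eta_H\ot\eta_H\ot\eta_H$, using counitality and the naturality of the braiding. Hence Theorem \ref{characterizationcoquasi} is applicable without further assumptions.

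Next I would compute the four idempotents $\Pi_{H}^{L}, \Pi_{H}^{R}, \overline{\Pi}_{H}^{L}, \overline{\Pi}_{H}^{R}$ (defined as the duals of those in Section 2) under these hypotheses. Using $\delta_H\co \eta_H=\eta_H\ot \eta_H$, $\varepsilon_H\co \mu_H=\varepsilon_H\ot \varepsilon_H$, counitality of $\delta_H$, the unit axiom of $\mu_H$, and the naturality of $c$, each of the four morphisms collapses to $\eta_H\co \varepsilon_H$. Substituting this into the definitions of $\Omega_{L}^{1}, \Omega_{R}^{1}, \Omega_{L}^{2}, \Omega_{R}^{2}$ and again invoking counitality and the unit axiom, each $\Omega$-morphism reduces to $id_{H\ot H}$.

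It follows that the splittings $H\times_{\sigma}^{\alpha} H$ can all be identified with $H\ot H$ and the structural morphisms $q_{\sigma}^{\alpha}$, $j_{\sigma}^{\alpha}$ with the corresponding identities. Under these identifications the morphisms $h$ and $s$ of Theorem \ref{characterizationcoquasi} become $\gamma$ and $\beta$, and the requirement that $j_{L}^{2}\co h^{-1}\co q_{R}^{2}$ (respectively, $j_{R}^{1}\co s^{-1}\co q_{L}^{1}$) be almost left (respectively, right) $H$-colinear translates verbatim to almost left $H$-colinearity of $\gamma^{-1}$ and almost right $H$-colinearity of $\beta^{-1}$. Moreover, since $\Pi_{H}^{L}=\Pi_{H}^{R}=\eta_H\co\varepsilon_H$, the weak axioms (b4-4)--(b4-7) collapse to the defining axioms of a Hopf coquasigroup, so in this setting the notions of weak Hopf coquasigroup and Hopf coquasigroup coincide. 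Theorem \ref{characterizationcoquasi} then yields the desired equivalence.

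There is no serious obstacle: the entire argument is the routine but careful verification that the $\Pi$-type morphisms trivialize and that the $\Omega$-morphisms consequently collapse to identities, in complete analogy with the proof of Corollary \ref{corolarioHqg}.
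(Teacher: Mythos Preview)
Your proposal is correct and follows exactly the approach the paper intends: the paper omits the proof of this corollary, indicating it is the straightforward dual of the proof of Corollary~\ref{corolarioHqg}, and your argument is precisely that dualization. The key observations---that (b2) and (b3) trivialize, that all four $\Pi$-type morphisms collapse to $\eta_H\co\varepsilon_H$, and hence that the $\Omega$-morphisms become identities so that $h=\gamma$ and $s=\beta$---mirror verbatim the paper's short proof of Corollary~\ref{corolarioHqg}.
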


We will finish this paper giving the corresponding characterization for weak Hopf algebras.

\begin{corollary}
\label{corolarioHweakcoquasi}
Let $H$ be a monoid and comonoid such that conditions (b1), (b2) and (b3) of Definition \ref{Weak-Hopf-coquasigroup} hold. The following assertions are equivalent.
\begin{itemize}
\item[(i)] $H$ is a weak Hopf algebra.
\item[(ii)] The morphism $h$ defined in Theorem \ref{characterizationcoquasi} is an isomorphism.
\item[(iii)] The morphism $s$ defined in Theorem \ref{characterizationcoquasi} is an isomorphism.
\end{itemize}
 \end{corollary}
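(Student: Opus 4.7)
The plan is to follow exactly the strategy used for the monoid case in Corollary \ref{corolarioHweak}, now dualized. The implications $(i)\Rightarrow (ii)$ and $(i)\Rightarrow (iii)$ are free: if $H$ is a weak Hopf algebra, then in particular it is a weak Hopf coquasigroup, so Theorem \ref{characterizationcoquasi} gives that $h$ and $s$ are isomorphisms.

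For $(ii)\Rightarrow (i)$, the point is that Theorem \ref{characterizationcoquasi} requires two things about $h$: that it is invertible, and that $j_{L}^{2}\co h^{-1}\co q_{R}^{2}$ is almost left $H$-colinear. The plan is to show that, in the present hypotheses where $H$ is additionally a comonoid, the almost left $H$-colinearity of $j_{L}^{2}\co h^{-1}\co q_{R}^{2}$ is \emph{automatic} whenever $h$ is an isomorphism. This requires a coquasigroup analogue of Lemma \ref{fbaixaporvarphi}. I would introduce dualized morphisms built from $\delta_H$ (replacing $\mu_H$ and juxtaposition with the coproduct) of the form
\[
\varrho_{H\times_{R}^{2}H}=q_{R}^{2}\co (\delta_H\ot H)\co j_{R}^{2},\qquad \varrho_{H\times_{L}^{2}H}=q_{L}^{2}\co (\delta_H\ot H)\co j_{L}^{2},
\]
and their variants with the coproduct on the second factor, and then prove the dual lemma: $\delta_H$ is coassociative if and only if $h$ commutes with the corresponding pair of $\varrho$-morphisms (and similarly for $s$). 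The argument is the formal dual of the proof of Lemma \ref{fbaixaporvarphi}: one uses the absorption identities $\Omega_{L}^{2}\co \gamma=\gamma$ and $\gamma\co \Omega_{R}^{2}=\gamma$ (dual to the ones established there for $\beta$), together with the compatibilities (\ref{muconomega}) and (\ref{omegaconmu}) adapted to $\Omega^{2}$.

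Granted this dual lemma, I would conclude the almost left $H$-colinearity exactly as in Corollary \ref{corolarioHweak}: from $h\co \varrho_{H\times_{L}^{2}H}=\varrho_{H\times_{R}^{2}H}\co (h\ot H)$ (which holds by the dual lemma since $H$ is coassociative by hypothesis), invert to obtain $\varrho_{H\times_{L}^{2}H}\co (h^{-1}\ot H)=h^{-1}\co \varrho_{H\times_{R}^{2}H}$, pre- and post-compose with the splittings $j_{L}^{2}$, $q_{R}^{2}$ and $H\ot \varepsilon_H\ot H$, $\delta_H\ot H$ at the appropriate slot, and use the idempotency identity $\Omega_{L}^{2}\co j_{L}^{2}=j_{L}^{2}$ (together with the analogous identity on the right-hand side) to reduce the expression back to $j_{L}^{2}\co h^{-1}\co q_{R}^{2}$. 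With almost left $H$-colinearity now in hand, Theorem \ref{characterizationcoquasi} applies and gives that $H$ is a weak Hopf coquasigroup. Since the underlying structure is already a monoid and comonoid, conditions (b4-4)--(b4-7) of Definition \ref{Weak-Hopf-coquasigroup} trivialize into consequences of (b4-1)--(b4-3) (by a short computation using associativity/coassociativity as indicated in the proof of Corollary \ref{corolarioHweak}), so $H$ is in fact a weak Hopf algebra. The implication $(iii)\Rightarrow (i)$ is obtained by the symmetric argument using $s$ in place of $h$ and the colinearity condition on the right.

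The main obstacle is formulating the dual lemma correctly: one must pick the right dual $\varrho$-morphisms (the paper warns that $\Omega_{L}^{1}\leftrightarrow\Omega_{L}^{2}$ and $\Omega_{R}^{1}\leftrightarrow\Omega_{R}^{2}$ swap under dualization), and one must verify the two absorption identities $\Omega_{L}^{2}\co \gamma=\gamma=\gamma\co \Omega_{R}^{2}$ without the antipode, via the same pattern used in Lemma \ref{fbaixaporvarphi} (applying (\ref{monoid-hl-2}) or its right-sided analogue together with (\ref{pi-l})). Once those identities are in place, everything else is a formally dual computation that I would not expect to produce surprises.
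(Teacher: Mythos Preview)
Your approach is essentially the paper's intended one (Section~3 omits all proofs, deferring to duality with Section~2), and the overall strategy---dualize Lemma~\ref{fbaixaporvarphi}, deduce automatic almost left $H$-colinearity from coassociativity, then build the antipode---is correct. Two points need tightening. First, your $\varrho$-morphisms do not typecheck as written: $q_{R}^{2}$ has domain $H\otimes H$, not $H\otimes H\otimes H$; the correct dual of $\varphi$ is a coaction-type map such as $(H\otimes q_{R}^{2})\circ(\delta_H\otimes H)\circ j_{R}^{2}:H\times_{R}^{2}H\to H\otimes(H\times_{R}^{2}H)$. Second, and more substantively, you cannot literally \emph{apply} Theorem~\ref{characterizationcoquasi} once you know only that $h$ is invertible with the colinearity condition, since that theorem also requires $s$ to be an isomorphism with its own colinearity condition. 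Rather---exactly as in the proof of Corollary~\ref{corolarioHweak}---you must \emph{follow the proof} of Theorem~\ref{characterizationcoquasi}, using only the portions that involve $h$, to construct $\lambda_H$ and verify (b4-1)--(b4-3) directly; coassociativity then makes (b4-4)--(b4-7) automatic. With these two corrections your argument goes through.
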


\section*{Acknowledgements}
The  authors were supported by  Ministerio de Econom\'{\i}a y Competitividad of Spain (European Feder support included). Grant MTM2013-43687-P: Homolog\'{\i}a, homotop\'{\i}a e invariantes categ\'oricos en grupos y \'algebras no asociativas.

\end{document}